\documentclass[11pt]{amsart}
\usepackage{amsmath,amssymb,amsthm}
\usepackage{hyperref}
\hypersetup{hidelinks=true}
%\usepackage{eucal}
%\usepackage{mathptm}
%\usepackage[active]{srcltx}

% Uncomment this to see unused references:
%\usepackage{refcheck}

% Uncomment this to number the pdf
%\usepackage[displaymath, %mathlines,pagewise]{lineno}
%\linenumbers

\usepackage{bbm}

\usepackage[utf8]{inputenc}

\usepackage{graphicx}
\usepackage{enumerate}
\usepackage{amsfonts,amssymb,latexsym}
\usepackage{accents}
\usepackage{afterpage}
\usepackage[style=alphabetic,maxbibnames=99, backend=bibtex]{biblatex}
\usepackage{tikz}

\addbibresource{bibtfg.bib}

\newcommand{\ep}{\varepsilon}
\newcommand{\N}{\mathbb{N}}

\newcommand{\R}{\mathbb{R}}

\newcommand{\Z}{\mathbb{Z}}

\newcommand\restr[2]{{% we make the whole thing an ordinary symbol
  \left.\kern-\nulldelimiterspace % automatically resize the bar with \right
  #1 % the function
  \vphantom{\big|} % pretend it's a little taller at normal size
  \right|_{#2} % this is the delimiter
  }}

\DeclareMathOperator{\co}{co}

\renewcommand{\>}{\rangle}

\newcommand{\cco}{\overline{\operatorname{co}}}

\newtheorem{fact}{Fact}[section]

\newtheorem{theorem}{Theorem}[section]
\newtheorem{lemma}[theorem]{Lemma}
\newtheorem{claim}[theorem]{Claim}
\newtheorem{prop}[theorem]{Proposition}
\newtheorem{corollary}[theorem]{Corollary}
\theoremstyle{definition}
\newtheorem{definition}[theorem]{Definition}
\newtheorem{example}[theorem]{Example}

\theoremstyle{remark}
\newtheorem{remark}[theorem]{Remark}
\numberwithin{equation}{section}
%    Absolute value notation

%    Blank box placeholder for figures (to avoid requiring any
%    particular graphics capabilities for printing this document).

\def\fnote#1{\footnote}

\def\R{{\mathbb R}}

\def\ignora#1{}
%\def\ignora#1{#1}
%\def\lbl#1{{\bf [label= #1]}\label{#1}} %Versi\'{o}n borrador
        %Versi\'{o}n definitiva
\def\n3#1{\left\vert  \! \left\vert \! \left\vert \, #1 \, \right\vert \!
  \right\vert \! \right\vert }

%\hoffset=-0.35cm    %traslaci\'{o}n correcta con winedit y sin
                    %especificar a4paper en las opciones de documentclass
%\voffset=-0.5cm \setlength{\textwidth}{126mm}
%\setlength{\textheight}{193mm}

\include{draft}

\begin{document}

\keywords{Lipschitz retractions; Hölder retractions}

\subjclass[2020]{46B20; 46B80; 51F30; 54C15}

\title[Hölder retractions]{Compact Hölder retractions and nearest point maps}

%\author{Petr H\'ajek}\thanks{This research was supported by CAAS CZ.02.1.01/0.0/0.0/16-019/0000778
 %and by the project  SGS21/056/OHK3/1T/13.}
%\address[P. H\'ajek]{Czech Technical University in Prague, Faculty of Electrical Engineering.
%Department of Mathematics, Technická 2, 166 27 Praha 6 (Czech Republic)}
%\email{ hajek@math.cas.cz}

\author{ Rub\'en Medina}\thanks{This research was supported by CAAS CZ.02.1.01/0.0/0.0/16-019/0000778, project  SGS21/056/OHK3/1T/13, MICINN (Spain) Project PGC2018-093794-B-I00 and MIU (Spain) FPU19/04085 Grant.}
\address[R. Medina]{Universidad de Granada, Facultad de Ciencias.
Departamento de An\'{a}lisis Matem\'{a}tico, 18071-Granada
(Spain); and Czech Technical University in Prague, Faculty of Electrical Engineering.
Department of Mathematics, Technická 2, 166 27 Praha 6 (Czech Republic)}
\email{rubenmedina@ugr.es}
\urladdr{\url{https://www.ugr.es/personal/ae3750ed9865e58ab7ad9e11e37f72f4}}

\maketitle

\begin{abstract}
In this paper, two main results concerning uniformly continuous retractions are proved. First, an $\alpha$-Hölder retraction from any separable Banach space onto a compact convex subset whose closed linear span is the whole space is constructed for every positive $\alpha<1$. This constitutes a positive solution to a Hölder version of a question raised by Godefroy and Ozawa. In fact, compact convex sets are found to be absolute $\alpha$-Hölder retracts under certain assumption of flatness. Second, we provide an example of a strictly convex Banach space $X$ arbitrarily close to $\ell_2$ (for the Banach Mazur distance) and a finite dimensional compact convex subset of $X$ for which the nearest point map is not uniformly continuous even when restricted to bounded sets.
\end{abstract}

\section{Introduction}\label{mainsect}
In this note we study retractions onto compact convex subsets of Banach spaces. A retraction is the natural nonlinear counterpart to the concept of projection for subsets of Banach spaces. In particular, Lipschitz and uniformly continuous retractions have been widely studied (see \cite{Phe58}, \cite{Lin64}, \cite{Bjo79}), often in the search of a way to extend Lipschitz and uniformly continuous maps (see \cite{Whi34} ,\cite{JLS86}, \cite{Bou87}, \cite{LN05}, \cite{God215}, \cite{BDS21}).

The first main result in this paper is motivated by the natural question asked by Godefroy and Ozawa in \cite{GO14}, and then subsequently in \cite{God15}, \cite{God215}, \cite{GMZ16}, \cite{GP19} and \cite{God20}. They wonder whether for every separable Banach space there is a Lipschitz retraction onto a compact convex subset whose closed linear span is the whole space. In \cite{HM21}, the latter question is solved in the positive for Banach spaces with a finite dimensional decomposition but the general problem is still open. The main result of this work is a positive solution to a Hölder version of the problem. Indeed, an $\alpha$-Hölder retraction onto such a subset is constructed for every positive $\alpha<1$. In \cite{HM22}, the bounded approximation property is proved to be necessary for a separable Banach space to have a Lipschitz retraction onto a generating compact convex subset satisfying  an additional mild assumption on its shape. This hints to the possibility of a negative solution for the case $\alpha=1$ (Lipschitz case).

It is worth mentioning that in \cite{CCW21} and later but independently in \cite{HM21}, compact convex subsets of Banach spaces are proven to be absolute uniform retracts. In fact, for separable Banach spaces it is enough to consider the nearest point map respect to a URED renorming of the space. This means that the modulus of continuity of the retraction depends on the modulus of convexity of the given URED norm and it is independent of the shape of the retract. Here, a completely different approach is used, allowing us to tune up the shape of the retract and obtain a much better estimate on the modulus of continuity of the retraction.

In section \ref{Holder} we analyse the compact convex subsets of Banach spaces in terms of flatness. That is, we are interested in the heights of a compact set respect to its finite dimensional sections (see Definition \ref{flat} below). In particular, for every $\alpha\in(0,1)$ we find a flatness condition such that every compact convex set under this condition is an absolute $\alpha$-Hölder retract (Theorem \ref{mainth2}). This leads to the main result of the paper Theorem \ref{mainth}, namely,  every separable Banach space $X$ has an $\alpha$-Hölder generating compact convex retract. Moreover, we prove that the provided generating compact convex subset of $X$ is not a Lipschitz retract whenever $X$ does not enjoy the bounded approximation property.

%On one hand, the existence of a Lipschitz retraction onto a generating compact convex subset of a Banach space $X$ yields a sequence of equi-Lipschitz maps $\psi_n:X\to X$ with compact range pointwise converging to the identity in $X$. On the other hand, in \cite{Kal12} Kalton asked whether every separable Banach space $X$ is approximable, that is, whether there is an equi-uniformly continuous sequence of maps $\psi_n:X\to X$ with relatively compact range pointwise converging to the identity in $X$. This motivates the study of uniformly continuous compact retractions. In fact, Kalton studied in \cite{Kal04} the space $\text{Lip}^{(\alpha)}(M)$ of $\alpha$-Hölder functions in a metric space $M$ and its predual $\mathcal{F}^{(\alpha)}(M)$, which is the Lipschitz free space over a snowflaking of $M$.

The second main result of this note is devoted to nearest point maps (also called as metric projections or proximity mappings). It was initially observed by Phelps in \cite{Phe58} that the nearest point map from a Hilbert space $X$ onto any closed convex subset is $1$-Lipschitz (nonexpansive). In fact, the latter property characterizes Hilbert spaces $X$ whenever $\text{dim}(X)\ge3$. Later, Björnestal generalized this result to Banach spaces with a uniformly rotund norm. He showed in \cite{Bjo79} that the nearest point map from a bounded subset of a Banach space onto a closed convex subset is uniformly continuous whenever the norm in $X$ is uniformly rotund. Brown \cite{Bro74} and Veselý \cite{Ves91} proved that the preceding is not true for strictly convex spaces. In particular, they found examples of strictly convex reflexive spaces and closed convex (noncompact) subsets of these spaces for which the nearest point map is not continuous.

However, for compact convex subsets the nearest point map from a strictly convex space is always continuous. Moreover, it is proved in \cite{HM21} that the nearest point map from any bounded subset of $X$ onto a compact convex subset $K$ of $X$ is uniformly continuous provided the norm in $X$ is uniformly rotund in the direction $x$ for every $x$ in the closed linear span of $K$. We prove that, as expected, the latter result cannot be generalized to rotund norms.

More precisely, in Section \ref{NPM} we construct a strictly convex Banach space $X$ arbitrarily close to $\ell_2$ (for the Banach Mazur distance) and a finite dimensional compact subset of $X$ for which the nearest point map is not uniformly continuous even when restricted to bounded sets (Theorem \ref{mainNPM}). This highly contrasts with the fact that every nearest point map for the usual norm of $\ell_2$ onto a closed convex subset  is nonexpansive.

\subsection{Definitions and notation.}

Let $(M,d_M)$ and $(N,d_N)$ be two metric spaces and let $f:M\to N$ be an arbitrary mapping. We consider the modulus of continuity of $f$ as the function $\omega_f:[0,\infty)\to[0,\infty]$ given by
$$\omega_f(t)=\sup\big\{d_N\big(f(x),f(y)\big)\;:\;d_M(x,y)\le t\big\},$$
where the supremum is considered to be infinite whenever it does not exist. We will say that $f$ is uniformly continuous if $\omega_f$ is continuous at $t=0$. Given some $\alpha\in(0,1]$, we will say that $f$ is $\alpha$-Hölder whenever there exists $C>0$ such that $\omega_f(t)\le Ct^\alpha$ for every $t$. In the particular case of $\alpha=1$ we say that $f$ is Lipschitz.

A retraction from a metric space $(M,d)$ onto a subset $N\subset M$ is a map $R:M\to N$ satisfying that $R(x)=x$ for every $x\in N$. The image of a retraction is called a retract. If the retraction is uniformly continuous (resp. $\alpha$-Hölder or Lipschitz) then we say that its image is a uniform (resp. $\alpha$-Hölder or Lipschitz) retract. We say that $N$ is an absolute uniform (resp. $\alpha$-Hölder or Lipschitz) retract if it is a uniform (resp. $\alpha$-Hölder or Lipschitz) retract of every metric space containing it.  In this setting, a nearest point map is any retraction $R:M\to N$ such that for every $x\in M$,
$$d(R(x),x)=\inf\limits_{y\in N} d(y,x).$$
A nearest point map may not be unique or may not exist in some cases.

We will refer as a net of a metric space $(M,d)$ to any subset $N\subset M$ such that there is $a,b\in\R^+$ satisfying the next two properties:
\begin{itemize}
\item $N$ is $a$-separated, that is, $d(x,y)\ge a$ for every $x\neq y\in N$.
\item $N$ is $b$-dense, that is, for every $x\in M$ there is $y\in N$ such that $d(x,y)\le b$.
\end{itemize}
In this case we say that $N$ is an $(a,b)$-net of $M$. If $M$ is locally compact then for every $\ep>0$ there is an $(\ep,\ep)$-net of $M$. In fact, it is enough to take a maximal $\ep$-separated subset of $M$.

If $X$ is a Banach space, we say that a subset $S$ of $X$ is a generating subset whenever the closed linear span of $S$ is $X$. We denote the closed linear span as $[S]$. A fundamental sequence of $X$ is any generating countable subset of $X$.

A Banach space $X$ is said to be strictly convex whenever the closed unit ball of $X$ is strictly convex. In this case, we say that the norm $||\cdot||$ of $X$ is rotund, which is equivalent to satisfying the following inequality
$$||x+y||<||x||+||y||\;\;\;\forall x,y\in S_X\;\text{ with }\;x\neq y.$$

For the remaining general concepts and results of Banach space theory we refer to \cite{Fab1}. Throughout the entire note we will follow the terminology and notation used in \cite{Fab1}. For the background on Lipschitz and uniformly continuous retractions we refer the reader to the first two chapters of the authoritative monograph \cite{BL2000}.

\section{Hölder retractions onto compact sets}\label{Holder}

In this section we will prove the following result.

\begin{theorem}\label{mainth}
For every separable Banach space $X$ and every $\alpha\in(0,1)$ there is a generating compact convex subset $K$ of $X$ which is an $\alpha$-Hölder retract of $X$.
\end{theorem}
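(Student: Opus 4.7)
The strategy is to reduce the problem to Theorem \ref{mainth2}, which guarantees that compact convex sets satisfying a suitable flatness condition (Definition \ref{flat}) are absolute $\alpha$-Hölder retracts. It thus suffices, given a separable $X$ and $\alpha \in (0,1)$, to exhibit a generating compact convex set $K \subset X$ that meets the flatness hypothesis of Theorem \ref{mainth2} for the prescribed exponent $\alpha$.

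For the construction, I would fix a dense sequence $(y_n)$ in the unit sphere $S_X$ (which exists by separability and is automatically fundamental), choose positive weights $(\lambda_n)$ tending to zero at a rate to be determined, and define
$$K := \caconv\{\lambda_n y_n : n \in \N\}.$$
Provided $\sum_n \lambda_n < \infty$, the set $\{\lambda_n y_n : n \in \N\} \cup \{0\}$ is compact, and hence so is $K$. The generating property $[K] = X$ is immediate, since $\lambda_n > 0$ for every $n$ and $(y_n)$ is dense in $S_X$.

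The key task is to calibrate $(\lambda_n)$ so that the flatness hypothesis of Theorem \ref{mainth2} is satisfied. Writing $F_n := [\{y_1,\dots,y_n\}]$, every $x \in K$ decomposes as a limit of sums $\sum_k c_k \lambda_k y_k$ with $\sum_k |c_k| \le 1$, which yields the standard height estimate
$$\sup_{x \in K}\dist(x, F_n) \,\le\, \sum_{k>n} \lambda_k \,=:\, \tau_n.$$
By letting $\lambda_n$ decay sufficiently fast (geometrically with a ratio depending on $\alpha$, or super-geometrically), one can force the tails $\tau_n$ to shrink at any prescribed rate while the dimensions $\dim F_n$ grow at most linearly in $n$. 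The principal obstacle I anticipate is matching these two rates against the quantitative flatness bound appearing in Theorem \ref{mainth2}: that bound presumably couples $\tau_n$ and $\dim F_n$ through an exponent governing the resulting Hölder constant, and one has to verify that the coupling admits a feasible choice of $(\lambda_n)$ for every $\alpha < 1$. The strict inequality $\alpha < 1$ provides the slack that makes this calibration possible, which is consistent with the remark in the introduction that the endpoint $\alpha = 1$ should be genuinely harder and in fact out of reach whenever $X$ lacks the bounded approximation property, by \cite{HM22}.
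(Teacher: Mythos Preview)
Your proposal is correct and follows essentially the same route as the paper: Theorem~\ref{mainth} is deduced from Theorem~\ref{mainth2} together with the construction of Example~\ref{mainex}, which is precisely a weighted absolutely convex hull of a fundamental sequence in $S_X$, as you describe. Two minor remarks: (i) the height in Definition~\ref{flat} is $d(x,K\cap F_n)$, not $d(x,F_n)$, but your truncated sums $\sum_{k\le n}c_k\lambda_k y_k$ lie in $K\cap F_n$, so your tail estimate gives the right quantity; (ii) the ``principal obstacle'' you anticipate does not arise---the flatness hypothesis of Theorem~\ref{mainth2} is simply $h_n^\beta\le 20^{n/(\alpha-1)}$, with no coupling to $\dim F_n$, so choosing $\lambda_n$ to decay geometrically with ratio $20^{-1/(1-\alpha)}$ (or just invoking Example~\ref{mainex} with $r_n=20^{n/(\alpha-1)}$) settles the calibration immediately.
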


In order to keep a clean notation, if $(x_n)_{n\in\N}$ is any sequence of a Banach space, we agree that $\sum\limits_{i=n}^mx_i=0$  and $[x_i]_{i=n}^m=0$ whenever $n>m$.

\subsection{The compact convex set.}

Let us first focus on the compact set that will be the target of our retraction. The following concept was previously considered in \cite{HM21} and \cite{HM22} and has proved to be very useful to measure the `asymptotic size' of a compact subset of a Banach space.

\begin{definition}\label{flat}
Given a compact subset $K$ of a Banach space $X$ with $0\in K$ and a fundamental sequence $\beta=(e_n)$ of $[K]$, we define the sequence of heights $(h^\beta_n)_{n\in\N\cup\{0\}}$ of $K$ relative to $\beta$ as
$$h_n^\beta=\sup\{d(x,K\cap[e_i]_{i=1}^n)\;:\;x\in K\}.$$
\end{definition}

Let us mention now that since $K$ is compact it holds that $\lim_nh^\beta_n=0$ for every choice of $\beta$. We will say that $K$ is $(r_n)$-flat for some null sequence of positive real numbers $(r_n)$ whenever there exists a fundamental sequence $\beta$ of $[K]$ such that $h_n^\beta\le r_n$ for every $n\in\N\cup\{0\}$. We will say that $K$ is flat if it is $(3+\ep)^{-n}$-flat for some $\ep>0$. See \cite{HM22} for some results on flat sets, although our definition here is slightly stronger than the one given in \cite{HM22}.

\begin{example}\label{mainex}
For every decreasing null sequence of positive real numbers $(r_n)$ and every separable Banach space $X$, there is an $(r_n)$-flat generating compact convex subset of $X$. In fact, consider $(e_n)\subset S_X$ a fundamental sequence of $X$ and let $E_n=[e_i]_{i=1}^n$ for every $n\in\N\cup\{0\}$. Then, we claim that both
$$K_1=\overline{\bigcup\limits_{n\in\N}\sum\limits_{k=1}^n2^{-k}[-r_{k-1},r_{k-1}]e_k}\;\;\;\;,\;\;\;\;K_2=\cco\bigg(\bigcup\limits_{n\in \N}[-r_{n-1},r_{n-1}]e_n\bigg)$$
are $(r_n)$-flat respect to the fundamental sequence $\beta=(e_n)$.
\begin{proof}
We prove the result for $K_1$, the proof for $K_2$ is essentially the same. Let us take $x=\sum\limits_{k=1}^m\lambda_{k-1}e_k$ for some $m\in\N$ where $|\lambda_{k-1}|\le2^{-k}r_{k-1}$ for $k=1,\dots,m$. Let $n\in\N\cup\{0\}$ be arbitrary. If $n\ge m$ then $x\in K_1\cap E_n$ and so $d(x,K_1\cap E_n)=0<r_n$. Otherwise,
$$\begin{aligned}d(x,K_1\cap E_n)\le&\Big|\Big|x-\sum\limits_{k=1}^n\lambda_{k-1}e_k\Big|\Big|=\Big|\Big|\sum\limits_{k=n+1}^m\lambda_{k-1}e_k\Big|\Big|\le\sum\limits_{k=n}^{m-1}|\lambda_k|\\\le& \sum\limits_{k=n}^{m-1}2^{-k-1}r_k\le r_n\sum\limits_{k=n}^{m-1}2^{-k-1}\le r_n.\end{aligned}$$
Hence, $d(x,K_1\cap E_n)\le r_n$ for every $n\in\N\cup \{0\}$ and $x\in K_1$. Therefore,
$$h^\beta_n=\sup\{d(x,K_1\cap[e_i]_{i=1}^n)\;:\;x\in K\}\le r_n\;\;\;\forall n\in\N\cup\{0\}$$
and we are done.
\end{proof}
\end{example}

From now on during this section $K$ is an arbitrary but fixed $(r_n)$-flat convex subset of a Banach space $X$ where $(r_n)\subset \R^+$ is an arbitrary decreasing null sequence. Let us consider $\beta=(e_n)$ the fundamental sequence of $[K]$ satisfying that $h_n^\beta\le r_n$ for every $n\in\N$. We will denote $E_n=[e_i]_{i=1}^n$ for $n\in\N$. Given $\ep>0$ we define here
$$n(\ep)=\min\{n\in\N\cup\{0\}\;:\; r_n\le\ep \}.$$
Now, for every $\ep>0$ we consider an $(\ep,\ep)$-net $N_\ep=(x_i^\ep)_{i=1}^{m_\ep}$ of $K\cap E_{n(\ep)}$. Clearly, $N_\ep$ is a $(\ep,2\ep)$-net of $K$ for every $\ep>0$.

In the next subsections we are going to construct a retraction from $X$ onto $K$.

\subsection{Whitney-type partitions of unity.}

In this subsection we are going to follow the construction of a partition of unity in $X\setminus K$ given in \cite{BDS21}, which follow the path traced by Whitney in \cite{Whi34} and was intended to provide a simpler proof of a celebrated result of Lee and Naor \cite{LN05}. Let us recall that their construction is done for doubling metric spaces which may not be the case of $K$. This partition of unity will be crucial later for the definition of the retraction.

We set $\ep_n=2^{-n}$ for every $n\in \Z$ and rename $N_{\ep_n}$ as $N_n$ relabeling its elements as $x_i^n:=x_i^{\ep_n}$ for every $i\in\{1,\dots,m_{\ep_n}=m_n\}$. Then, let us consider for each $n\in\Z$ and $i\le m_n$ the sets
$$\widetilde{V}_i^n=\{x\in X\;:\;\ep_n\le d(x,K)<\ep_{n-1}\;,\;d(x,x_i^n)=\min\limits_{j}d(x,x_j^n)\}$$
and
$$V_i^n=\{x\in X\;:\;d(x,\widetilde{V}_i^n)\le\ep_{n+1}\}.$$
\begin{lemma}\label{basics}
The family $\mathcal{F}=\{V_i^n\;:\;n\in\Z,\;i\in\{1,\dots,m_n\}\}$ is a locally finite cover of $X\setminus K$. Moreover, if $x\in V_i^n\in\mathcal{F}$ then,
\begin{enumerate}
\item $d(x,K)/5\le||x-x_i^n||\le 9d(x,K)$.\label{part1}
\item $\#\{V\in\mathcal{F}\;:\;x\in V\}\le 5\cdot 20^{n(d(x,K)/10)}$.\label{part2}
\item $d(x,K)/4\le\max_{V\in F}d(x,V^c)\le d(x,K)$.\label{part3}
\end{enumerate}
\begin{proof}
Let us take $x\in V_i^n$.
The computations proving \eqref{part1} and \eqref{part3} are already done in \cite{BDS21} but we include them here for the sake of completeness.
We start proving \eqref{part1}. An easy application of the triangular inequality yields
\begin{equation}\label{eq0part}d(x,K)\le d(x,\widetilde{V}_i^n)+\sup \limits_{y\in\widetilde{V}_i^n}d(y,K)\le\ep_{n+1}+\ep_{n-1}=5\ep_{n+1}.\end{equation}
Now, take $\delta>0$, $y\in \widetilde{V}_i^n$ and $k\in K$ such that $||x-y||\le \ep_{n+1}+\delta$ and $d(y,K)\ge ||y-k||-\delta$. Consider also $j\le m_n$ such that $||k-x_j^n||\le 2\ep_n=\ep_{n-1}$. Then,
\begin{equation}\label{eq1part}||x-x_i^n||\ge d(x,K)\ge d(y,K)-||x-y||\ge \ep_{n}-\ep_{n+1}-\delta=\ep_{n+1}-\delta\end{equation}
and
$$||y-x_i^n||\le||y-x_j^n||\le ||y-k||+||k-x_j^n||\le d(y,K)+\delta+\ep_{n-1}\le2\ep_{n-1}+\delta.$$
Therefore,
\begin{equation}\label{auxeq}||x-x_i^n||\le ||x-y||+||y-x_i^n||\le\ep_{n+1}+2\ep_{n-1}+2\delta=9\ep_{n+1}+2\delta.\end{equation}
By \eqref{eq1part}, $\ep_{n+1}\le d(x,K)+\delta$ so the previous inequality yields
\begin{equation}\label{eq2part}||x-x_i^n||\le9d(x,K)+11\delta.\end{equation}
Since $\delta>0$ was arbitrary \eqref{part1} follows from \eqref{eq0part}, \eqref{eq1part} and \eqref{eq2part}. In particular from \eqref{auxeq} the sharper estimate $||x-x_i^n||\le 9\ep_{n+1}$ is deduced. We will use this last estimate to prove \eqref{part2}. Let us call $A_k=\{j\in\N\;:\;j\le m_k\;,\;x\in V_j^k\}$ for every $k\in\Z$. We will first obtain an upper bound for $\#A_k$. Let us fix a pair $j,k$ such that $x\in V_{j}^{k}$. If $x\in V_{l}^k$ for some other $l\le m_k$ then
$$||x_{j}^k-x_{l}^k||\le ||x_{j}^k-x||+||x_{l}^k-x||\le 18\ep_{k+1}=9\ep_{k}.$$
In particular, if for each $y\in E_{n(\ep_k)}$ and $r\in\R^+$ we denote the ball in $E_{n(\ep_k)}$ of radius $r$ centered at $y$ as $B(y,r)$, then
$$B(x_{l}^k,\ep_k/2)\subset B(x_{j}^k,10\ep_k),$$
for every $l\le m_k$ such that $x\in V_{l}^k$. In other words,
$$A_k\subset\{l\in\N\;:\;l\le m_k,\;B(x_{l}^k,\ep_k/2)\subset B(x_{j}^k,10\ep_k)\}=:B_k.$$
Now, if $\lambda$ stands for the Lebesgue measure in $E_{n(\ep_k)}$ then by the change of variables theorem,
\begin{equation}\label{meas1}\lambda\big( B(x_{j_1}^k,10\ep_k) \big)=20^{n(\ep_k)}\lambda\big( B(x_{j_2}^k,\ep_k/2) \big)\;\;\;\forall j_1,j_2\in B_k.\end{equation}
Also, since $N_k$ is $\ep_k$-separated we have that
\begin{equation}\label{meas2}\lambda\big(B(x_{j_1}^k,\ep_k/2)\cap B(x_{j_2}^k,\ep_k/2)\big)=0\;\;\;\;\forall j_1,j_2\in B_k.\end{equation}
From \eqref{meas1} and \eqref{meas2} we deduce that
$$\begin{aligned}\lambda\big(B(x_j^k,10\ep_k)\big)\ge&\lambda\Big(\bigcup\limits_{l\in B_k}B(x_l^k,\ep_k/2)\Big)\\=&\sum\limits_{l\in B_k}\lambda\big(B(x_l^k,\ep_k/2)\big)=\#B_k\frac{\lambda\big(B(x_j^k,10\ep_k)\big)}{20^{n(\ep_k)}}.\end{aligned}$$
Multiplying by $\frac{20^{n(\ep_k)}}{\lambda\big(B(x_j^k,10\ep_k)\big)}$ on both sides of the previous inequality we get $\#B_k\le20^{n(\ep_k)}$ so that we ultimately obtain
$$\#A_k\le20^{n(\ep_k)}.$$
Now, if $j,k$ are such that $x\in V_j^k$ then clearly $|k-n|\le2$. Hence,
$$\begin{aligned}\#\{V\in\mathcal{F}\;:\;x\in V\}\le&\sum\limits_{\substack{k\in\N\\|k-n|\le2}}\#A_{k}\le 5\cdot20^{n(\ep_{n+2})}.\end{aligned}$$
The inequality \eqref{part2} follows now from the fact that $n(\ep)$ is a decreasing function and that by \eqref{eq0part}, $\ep_{n+2}=\ep_{n+1}/2\ge d(x,K)/10$. Finally, we pass to the proof of \eqref{part3}. The inequality $\max_{V\in \mathcal{F}}d(x,V^c)\le d(x,K)$ is true since $K\subset V^c$ for all $V\in\mathcal{F}$. To prove the other inequality, let us take $j,k$ such that $x\in \widetilde{V}_j^k$. Then $d(x,(V_j^k)^c)\ge \ep_{k+1}$ and thus
$$d(x,K)/4<\ep_{k-1}/4=\ep_{k+1}\le d(x,(V_j^k)^c)\le\max\limits_{V\in\mathcal{F}}d(x,V^c).$$
\end{proof}
\end{lemma}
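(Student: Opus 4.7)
The plan is to build on the Whitney-style construction in \cite{BDS21}, adapting it to the non-doubling setting by exploiting the fact that each net $N_k$ sits inside the finite-dimensional section $E_{n(\ep_k)}$. The covering of $X\setminus K$ by $\mathcal{F}$ is immediate from the definition of $\widetilde V_i^n$: if $x\in X\setminus K$, pick $n\in\Z$ with $\ep_n\le d(x,K)<\ep_{n-1}$ and use that $N_n$ is $\ep_{n-1}$-dense in $K$ (since $N_n$ is an $\ep_n$-net of $K\cap E_{n(\ep_n)}$ and $K$ is $(r_n)$-flat) to produce an index $i$ with $x\in\widetilde V_i^n\subset V_i^n$. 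Local finiteness will then drop out of the pointwise bound (2) together with the scale constraint $|k-n|\le 2$ proved along the way.

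For (1), I would fix $\delta>0$ and pick approximate witnesses $y\in\widetilde V_i^n$, $k\in K$, and $x_j^n\in N_n$ realizing $\|x-y\|\le\ep_{n+1}+\delta$, $\|y-k\|\le\ep_{n-1}+\delta$, and $\|k-x_j^n\|\le\ep_{n-1}$. The minimality of $x_i^n$ as the closest net point to $y$ gives $\|y-x_i^n\|\le 2\ep_{n-1}+\delta$, whence by the triangle inequality $\|x-x_i^n\|\le 9\ep_{n+1}+2\delta$. Since $d(y,K)\ge\ep_n$ and $\|x-y\|\le\ep_{n+1}+\delta$, one also has $\ep_{n+1}\le d(x,K)+\delta$; combining these and letting $\delta\to 0$ yields $\|x-x_i^n\|\le 9d(x,K)$. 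The lower estimate $d(x,K)/5\le\|x-x_i^n\|$ is then trivial from $x_i^n\in K$.

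The main obstacle is (2), where doubling is unavailable for $K$. The idea is to replace the doubling argument by a volume comparison inside the finite-dimensional space $E_{n(\ep_k)}$. Fix a scale $k$ and bound $\#A_k:=\#\{j:x\in V_j^k\}$: by (1) every such $x_j^k$ lies within $9\ep_{k+1}$ of $x$, so any two $x_{j_1}^k,x_{j_2}^k$ with $j_1,j_2\in A_k$ are at mutual distance at most $9\ep_k$; hence all such $x_j^k$ sit in $B(x_{j_0}^k,10\ep_k)\subset E_{n(\ep_k)}$ for any reference $j_0\in A_k$. Because $N_k$ is $\ep_k$-separated, the balls $B(x_j^k,\ep_k/2)\subset E_{n(\ep_k)}$ are pairwise disjoint, and scaling Lebesgue measure on $E_{n(\ep_k)}$ produces $\#A_k\le 20^{n(\ep_k)}$. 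The thickening in the definition of $V_j^k$ forces $\ep_{k+1}\le d(x,K)\le 5\ep_{k+1}$ for every scale $k$ with $x\in V_j^k$, so at most five consecutive scales contribute and the worst admissible one satisfies $\ep_k\ge d(x,K)/10$; since $n(\cdot)$ is nonincreasing, summing yields the bound $5\cdot 20^{n(d(x,K)/10)}$.

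Finally, (3) is short: the upper bound $\max_{V\in\mathcal{F}}d(x,V^c)\le d(x,K)$ follows from $K\cap V=\emptyset$ for every $V\in\mathcal{F}$. For the lower bound, pick $j,k$ with $x\in\widetilde V_j^k$ (which exists because the $\widetilde V_j^k$ already cover $X\setminus K$); since $V_j^k$ is by definition the closed $\ep_{k+1}$-neighborhood of $\widetilde V_j^k$, we have $d(x,(V_j^k)^c)\ge\ep_{k+1}>d(x,K)/4$, using $d(x,K)<\ep_{k-1}=4\ep_{k+1}$.
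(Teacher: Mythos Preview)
Your argument is correct and follows the paper's proof essentially line by line: the same witness/triangle-inequality computation for (1), the same Lebesgue-volume packing count inside the finite-dimensional section $E_{n(\ep_k)}$ for (2) together with the scale constraint giving at most five contributing levels, and the same short argument for (3). The only cosmetic point is that the bound $\|x-x_j^k\|\le 9\ep_{k+1}$ you invoke in (2) is the sharper intermediate estimate obtained during your proof of (1) (from $\|x-x_i^n\|\le 9\ep_{n+1}+2\delta$) rather than the statement of (1) itself, which only gives $9d(x,K)$; since you do establish it along the way, this is harmless.
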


Once we have defined the locally finite cover of $X\setminus K$, it is time to define the associated functions that will form our partition of unity. As an additional property, we are going to prove that these functions are $L(\ep)$-Lipschitz when restricted to $\{x\in X\;:\;d(x,K)\ge\ep\}$ for every $\ep>0$. The difference between our case and the case of doubling metric spaces arises when considering  small values of $\ep$ since we will see that $\sup_{\ep>0}L(\ep)=\infty$.

The partition of unity is now defined in a very natural way. For every $n\in\Z$, $i\in\{1,\dots, m_n\}$ and $x\in X\setminus K$, we set
$$\varphi_i^n(x)=\frac{d(x,(V_i^n)^c)}{\sum\limits_{k,j}d(x,(V_j^k)^c)}.$$
It is clear that the pair $\big(\mathcal{F},(\varphi_i^n)_{i,n}\big)$ is a partition of unity. Let us quantify its Lipschitz behaviour.

\begin{lemma}\label{partition}
For every $x,y\in X\setminus K$,
$$\sum\limits_{i,n}|\varphi_i^n(x)-\varphi_i^n(y)|\le \frac{40\big( 20^{n(d(x,k)/10)}+ 20^{n(d(y,k)/10)}\big)}{\max\{d(x,K),d(y,K)\}}||x-y||.$$
\begin{proof}
For aesthetic purposes, we rename and relabel the family of functions $\{d(x,(V_i^n)^c)\}_{\substack{n\in\Z\\i\in\{1,\dots,m_n\}}}$ as $\{d_i(x)\}_{i\in\N}$. Clearly, $|d_i(x)-d_i(y)|\le||x-y||$ for every $i\in\N$. Hence, we compute
$$\begin{aligned}\bigg| \frac{d_i(x)}{\sum\limits_jd_j(x)}- \frac{d_i(y)}{\sum\limits_jd_j(y)} \bigg|\le&\bigg| \frac{d_i(x)}{\sum\limits_jd_j(x)}-\frac{d_i(y)}{\sum\limits_jd_j(x)}\bigg|+\bigg| \frac{d_i(y)}{\sum\limits_jd_j(x)}-\frac{d_i(y)}{\sum\limits_jd_j(y)}\bigg|\\=&  \frac{|d_i(x)-d_i(y)|}{\sum\limits_jd_j(x)}+d_i(y) \frac{\Big|\sum\limits_jd_j(x)-d_j(y)\Big|}{\Big(\sum\limits_jd_j(x)\Big)\Big(\sum\limits_jd_j(y)\Big)}\\\le& \Bigg(  \frac{1}{\sum\limits_jd_j(x)}+\frac{d_i(y)\#\{V\in\mathcal{F}\;:\;\{x,y\}\cap V\neq\emptyset\}}{\Big(\sum\limits_jd_j(x)\Big)\Big(\sum\limits_jd_j(y)\Big)}  \Bigg)||x-y||.\end{aligned}$$
Taking into account the previous inequality and the fact that $\varphi_i^n(x)-\varphi_i^n(y)=0$ if $\{x,y\}\cap V_i^n=\emptyset$ we deduce that
$$\sum\limits_{i,n}|\varphi_i^n(x)-\varphi_i^n(y)|\le \frac{2\#\{V\in\mathcal{F}\;:\;\{x,y\}\cap V\neq\emptyset\}}{\sum\limits_jd_j(x)}||x-y||.$$
Clearly, by \eqref{part2} of Lemma \ref{basics},
$$\begin{aligned}\#\{V\in\mathcal{F}\;:\;\{x,y\}\cap V\neq\emptyset\}\le&\#\{V\in\mathcal{F}\;:\;x\in V\}+\#\{V\in\mathcal{F}\;:\;y\in V\}\\\le&5\cdot\big(20^{n(d(x,K)/10)}+20^{n(d(y,K)/10)}\big),\end{aligned}$$
and by \eqref{part3} of Lemma \ref{basics},
$$\sum_jd_j(x)\ge\max_{V\in\mathcal{F}}d(x,V^c)\ge d(x,K)/4.$$
Hence,
$$\sum\limits_{i,n}|\varphi_i^n(x)-\varphi_i^n(y)|\le\frac{40\big( 20^{n(d(x,k)/10)}+ 20^{n(d(y,k)/10)}\big)}{d(x,K)}||x-y||.$$
Interchanging the roles of $x$ and $y$ one may have
$$\sum\limits_{i,n}|\varphi_i^n(x)-\varphi_i^n(y)|\le\frac{40\big( 20^{n(d(x,k)/10)}+ 20^{n(d(y,k)/10)}\big)}{d(y,K)}||x-y||,$$
which finishes the proof.
\end{proof}
\end{lemma}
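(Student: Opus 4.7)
The plan is to exploit the classical quotient-of-distances decomposition. Writing $d_i^n(\cdot):=d(\cdot,(V_i^n)^c)$ and $S(\cdot):=\sum_{k,j}d_j^k(\cdot)$, so that $\varphi_i^n=d_i^n/S$, a short algebraic manipulation produces the triangle-type inequality
$$\bigl|\varphi_i^n(x)-\varphi_i^n(y)\bigr|\le\frac{|d_i^n(x)-d_i^n(y)|}{S(x)}+\frac{d_i^n(y)\,|S(x)-S(y)|}{S(x)\,S(y)}.$$
Each $d_i^n$ is $1$-Lipschitz; more importantly, $d_i^n(z)=0$ whenever $z\notin V_i^n$, so a summand of either piece can contribute only when $V_i^n$ meets $\{x,y\}$, a property I will use crucially to keep the count of active terms finite.

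Summing over $(i,n)$ is then a matter of accounting. For the second piece, the identity $\sum_{i,n}d_i^n(y)=S(y)$ collapses the denominator, leaving $|S(x)-S(y)|/S(x)$; and $|S(x)-S(y)|\le\sum_{i,n}|d_i^n(x)-d_i^n(y)|\le N\|x-y\|$, where $N$ stands for the number of cells of $\mathcal{F}$ that intersect $\{x,y\}$. The first piece is similarly controlled by $N\|x-y\|/S(x)$. Combining both estimates yields
$$\sum_{i,n}|\varphi_i^n(x)-\varphi_i^n(y)|\le\frac{2N}{S(x)}\|x-y\|.$$

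To close the argument I would call on Lemma~\ref{basics}. Part~\eqref{part2} gives $N\le\#\{V\in\mathcal{F}:x\in V\}+\#\{V\in\mathcal{F}:y\in V\}\le 5\bigl(20^{n(d(x,K)/10)}+20^{n(d(y,K)/10)}\bigr)$, while part~\eqref{part3} yields $S(x)\ge\max_{V\in\mathcal{F}}d(x,V^c)\ge d(x,K)/4$. Plugging these in produces the advertised bound with $d(x,K)$ in place of $\max\{d(x,K),d(y,K)\}$ in the denominator; since the whole derivation is symmetric in $x$ and $y$, running it with the roles swapped upgrades the denominator to the maximum.

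The only genuine subtlety is the book-keeping: one must recognize that (a) only the finitely many cells meeting $\{x,y\}$ contribute, thanks to the support property of the $d_i^n$, and (b) the second error term from the quotient trick has to be dealt with via the $\sum d_i^n(y)=S(y)$ telescoping, since a naive bound would grow quadratically in $N$. Beyond these two observations, everything reduces mechanically to the already-established Lemma~\ref{basics}.
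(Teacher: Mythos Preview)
Your argument is correct and follows essentially the same route as the paper's proof: the same quotient decomposition $\varphi_i^n=d_i^n/S$, the same observation that only the $N$ cells meeting $\{x,y\}$ contribute, the same collapse $\sum_{i,n}d_i^n(y)=S(y)$ for the second error term, and the same invocation of parts \eqref{part2} and \eqref{part3} of Lemma~\ref{basics} followed by a symmetry swap to upgrade the denominator. The only cosmetic difference is that the paper bounds the second piece termwise by $d_i(y)\cdot N\|x-y\|/\bigl(S(x)S(y)\bigr)$ before summing, whereas you sum first and then bound $|S(x)-S(y)|$; these yield the same $2N/S(x)$ bound.
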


\subsection{The retraction.}

Now, we are ready to define the retraction onto $K$. Let us consider the map $R:X\to K$ given by
$$R(x)=\begin{cases}\sum\limits_{i,n}\varphi_i^n(x)x_i^n\;\;&\text{ if }x\in X\setminus K,\\ x&\text{ if }x\in K.\end{cases}$$
Clearly, if $x\in X\setminus K$ then by \eqref{part1} of Lemma \ref{basics},
$$||R(x)-x||\le\sum\limits_{i,n}\varphi_i^n(x)||x_i^n-x||\le 9d(x,K).$$
Hence, we have the following Fact \ref{npm}.
\begin{fact}\label{npm}
For every $x\in X$,
$$||R(x)-x||\le 9d(x,K).$$
\end{fact}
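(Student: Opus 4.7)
The plan is to split into two cases depending on whether $x\in K$ or $x\in X\setminus K$. The case $x\in K$ is immediate from the piecewise definition of $R$, since then $R(x)=x$ and both sides of the claimed inequality vanish.

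For the remaining case $x\in X\setminus K$, I would start by using that $\big(\mathcal{F},(\varphi_i^n)_{i,n}\big)$ is a partition of unity, so $\sum_{i,n}\varphi_i^n(x)=1$ and hence $x=\sum_{i,n}\varphi_i^n(x)\,x$. Combined with the definition of $R$ and the triangle inequality this gives
$$\|R(x)-x\|=\Big\|\sum_{i,n}\varphi_i^n(x)(x_i^n-x)\Big\|\le\sum_{i,n}\varphi_i^n(x)\,\|x_i^n-x\|.$$

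The key observation is that $\varphi_i^n(x)$ vanishes outside $V_i^n$, so only indices with $x\in V_i^n$ contribute to the sum. For every such index, part \eqref{part1} of Lemma \ref{basics} yields $\|x-x_i^n\|\le 9\,d(x,K)$. Pulling this uniform bound out of the convex combination and using $\sum_{i,n}\varphi_i^n(x)=1$ produces $\|R(x)-x\|\le 9\,d(x,K)$, as required.

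There is no real obstacle here; the fact is essentially a bookkeeping consequence of the earlier work. The only subtlety worth checking is that the series defining $R(x)$ converges absolutely and that the rearrangement used above is legitimate, which follows from the local finiteness asserted in Lemma \ref{basics} (only finitely many $\varphi_i^n(x)$ are nonzero for each fixed $x\in X\setminus K$).
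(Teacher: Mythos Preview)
Your proof is correct and follows essentially the same approach as the paper: the paper derives the bound in one line by writing $\|R(x)-x\|\le\sum_{i,n}\varphi_i^n(x)\|x_i^n-x\|$ and invoking part~\eqref{part1} of Lemma~\ref{basics}, with the case $x\in K$ being trivial. Your additional remarks on absolute convergence and local finiteness are harmless elaborations of details the paper leaves implicit.
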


\begin{prop}\label{proplip}
For every $x,y\in X\setminus K$,
$$||R(x)-R(y)||\le 760\big(20^{n(d(x,K)/10)}+20^{n(d(y,K)/10)}\big)||x-y||.$$
\begin{proof}
Let us distinguish two different cases. The first case is when $||x-y||\ge\max\{d(x,K),d(y,K)\}$. In this case, from Fact \ref{npm} we deduce that
$$\begin{aligned}||R(x)-x||\le9d(x,K)\le9\max\{d(x,K),d(y,K)\}\le9||x-y||.\end{aligned}$$
Similarly, $||R(y)-y||\le9||x-y||$ so that
$$||R(x)-R(y)||\le||R(x)-x||+||R(y)-y||+||x-y||\le19||x-y||.$$
Now, we may assume that $||x-y||\le\max\{d(x,K),d(y,K)\}$. In this case we fix $i_0,n_0$ such that $x\in V_{i_0}^{n_0}$. If $x\in V_i^n$ for some other pair $i,n$ then by \eqref{part1} in Lemma \ref{basics} we obtain
$$||x_i^n-x_{i_0}^{n_0}||\le||x_i^n-x||+||x_{i_0}^{n_0}-x||\le18\max\{d(x,K),d(y,K)\}.$$
On the other hand, if the pair $i,n$ is such that $y\in V_i^n$ then
$$||x_i^n-x_{i_0}^{n_0}||\le ||x_i^n-y||+||x_{i_0}^{n_0}-x||+||x-y||\le 19\max\{d(x,K),d(y,K)\}.$$
Hence, by Lemma \ref{partition},
$$\begin{aligned}||R(x)-R(y)||=&\Big|\Big| \sum\limits_{i,n}\big(\varphi_i^n(x)-\varphi_i^n(y)\big)(x_i^n-x_{i_0}^{n_0}) \Big|\Big|\\\le& \sum\limits_{i,n}|\varphi_i^n(x)-\varphi_i^n(y)|\cdot19\max\{d(x,K),d(y,K)\}\\\le& 760\big( 20^{n(d(x,k)/10)}+ 20^{n(d(y,k)/10)}\big)||x-y||.\end{aligned}$$
\end{proof}
\end{prop}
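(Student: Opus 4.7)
The plan is to split the estimate into two regimes according to whether $\|x-y\|$ dominates $\max\{d(x,K), d(y,K)\}$ or is dominated by it. In the \emph{first regime}, when $\|x-y\| \ge \max\{d(x,K), d(y,K)\}$, I would bypass the partition of unity altogether: Fact \ref{npm} gives $\|R(x)-x\| \le 9\,d(x,K) \le 9\|x-y\|$, and symmetrically for $y$, so the triangle inequality produces $\|R(x)-R(y)\| \le 19\|x-y\|$. Since $20^{n(\cdot)} \ge 1$, this is safely within the desired bound.

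In the \emph{second regime}, when $\|x-y\| \le \max\{d(x,K), d(y,K)\}$, I would exploit that $\sum_{i,n}\varphi_i^n(x) = \sum_{i,n}\varphi_i^n(y) = 1$ to insert an anchor. Fix any $(i_0, n_0)$ with $x \in V_{i_0}^{n_0}$ and rewrite
\[ R(x) - R(y) = \sum_{i,n}\bigl(\varphi_i^n(x) - \varphi_i^n(y)\bigr)\bigl(x_i^n - x_{i_0}^{n_0}\bigr). \]
Only pairs $(i,n)$ with $\{x,y\} \cap V_i^n \neq \emptyset$ contribute. When $x \in V_i^n$, two applications of Lemma \ref{basics}\eqref{part1} plus the triangle inequality yield $\|x_i^n - x_{i_0}^{n_0}\| \le 18\,d(x,K)$. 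When $y \in V_i^n$ but $x \notin V_i^n$, I would route through $y$: $\|x_i^n - x_{i_0}^{n_0}\| \le \|x_i^n - y\| + \|y-x\| + \|x - x_{i_0}^{n_0}\| \le 9\,d(y,K) + \|x-y\| + 9\,d(x,K)$, and the regime assumption bounds this by $19\max\{d(x,K), d(y,K)\}$. With the uniform estimate $\|x_i^n - x_{i_0}^{n_0}\| \le 19\max\{d(x,K), d(y,K)\}$ in hand, Lemma \ref{partition} delivers exactly the inverse factor $\max\{d(x,K),d(y,K)\}$ in its denominator, so the two $\max$ terms cancel and the product $19 \cdot 40 = 760$ produces the advertised constant.

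The \emph{main obstacle} is precisely the regime switch: Lemma \ref{partition} carries $\max\{d(x,K), d(y,K)\}$ in the denominator, and this cancellation only works when the anchor differences $\|x_i^n - x_{i_0}^{n_0}\|$ are of the same order as that maximum. When $\|x-y\|$ exceeds the distances to $K$, the anchor $x_{i_0}^{n_0}$ may be far from the relevant net points in a way that Lemma \ref{partition} does not absorb, which is why the first regime must be handled separately through Fact \ref{npm}. Once the dichotomy is set up, everything else reduces to triangle-inequality bookkeeping around the chosen anchor.
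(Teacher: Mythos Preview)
Your proposal is correct and follows essentially the same approach as the paper: the same dichotomy on $\|x-y\|$ versus $\max\{d(x,K),d(y,K)\}$, the same use of Fact~\ref{npm} in the large-distance regime, and the same anchoring trick with Lemma~\ref{basics}\eqref{part1} and Lemma~\ref{partition} in the small-distance regime. Your explanation of \emph{why} the regime split is needed (the denominator in Lemma~\ref{partition} only cancels when the anchor differences are of the right order) is a nice addition that the paper leaves implicit.
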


This result is showing how the retraction losses its Lipschitz constant when the points get close to $K$. In fact, the slower $n(d(x,K)/10)$ grows to infinity when $d(x,K)$ tends to zero, the slower the Lipschitz constant is lost when the points are approaching $K$. The good news are that the sequence $(r_n)$ controls how fast $n(d(x,K)/10)$ grows. This means that the sequence $(r_n)$ is monitoring how fast the Lipschitz constant is lost.

In order to present the latter in a precise way, we compute an upper bound for the modulus of continuity of $R$.

\begin{theorem}\label{mainan}
For every $t\in\R^+$,
$$\omega_R(t)\le 1520\cdot20^{n(t/20)}t.$$
\begin{proof}
Let us take $y\in X$. We are going to distinguish between several different cases.

\textit{Case 1}. The case when $x\in K$. 
Clearly $||x-y||\ge d(y,K)$. Therefore, by Fact \eqref{npm},
$$\begin{aligned}||R(x)-R(y)||\le& ||R(x)-x||+||R(y)-y||+||x-y||\\\le&9d(y,K)+||x-y||\le 10||x-y||.\end{aligned}$$

\textit{Case 2}. The case when $x\in X\setminus K$ and $d(y,K)<d(x,K)/2$.
By the triangle inequality $||x-y||\ge d(x,K)-d(y,K)\ge d(x,K)/2$. Hence, using again Fact \eqref{npm},
$$\begin{aligned}||R(x)-R(y)||\le& ||R(x)-x||+||R(y)-y||+||x-y||\\\le&9(d(x,K)+d(y,K))+||x-y||\le 28||x-y||.\end{aligned}$$

\textit{Case 3}. The case when $x\in X\setminus K$, $d(y,K)\ge d(x,K)/2$ and $||x-y||\ge d(x,K)$.
Here, $d(y,K)\le d(x,K)+||x-y||\le 2||x-y||$. Hence,
$$\begin{aligned}||R(x)-R(y)||\le& ||R(x)-x||+||R(y)-y||+||x-y||\\\le&9(d(x,K)+d(y,K))+||x-y||\le 55||x-y||.\end{aligned}$$

\textit{Case 4}. The case when $x\in X\setminus K$, $d(y,K)\ge d(x,K)/2$ and $||x-y||< d(x,K)$.
By Proposition \ref{proplip},
$$\begin{aligned}||R(x)-R(y)||\le&760\big(20^{n(d(x,K)/10)}+20^{n(d(y,K)/10)}\big)||x-y||\\\le& 1520\cdot 20^{n(d(x,K)/20)}||x-y||\le1520\cdot20^{n(||x-y||/20)}||x-y||.\end{aligned}$$
\end{proof}
\end{theorem}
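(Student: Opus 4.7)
The plan is to bound $\|R(x)-R(y)\|$ whenever $\|x-y\|\le t$, splitting into cases according to where $x$ and $y$ lie relative to $K$, so that in all but the last case one can afford to bound $R$ crudely using the triangle inequality together with Fact~\ref{npm}, and only in the truly delicate regime does one need to invoke the Lipschitz estimate of Proposition~\ref{proplip}.

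First I would without loss of generality assume $d(x,K)\ge d(y,K)$ (the bound is symmetric in $x,y$) and treat the easy cases. \textbf{Case 1:} if $x\in K$, then $\|x-y\|\ge d(y,K)$, so Fact~\ref{npm} applied to $y$ and the triangle inequality give $\|R(x)-R(y)\|\le \|R(y)-y\|+\|x-y\|\le 9d(y,K)+\|x-y\|\le 10\|x-y\|$. \textbf{Case 2:} if $x\in X\setminus K$ and $d(y,K)<d(x,K)/2$, then $\|x-y\|\ge d(x,K)-d(y,K)\ge d(x,K)/2$, so Fact~\ref{npm} applied to both $x$ and $y$ yields $\|R(x)-R(y)\|\le 9(d(x,K)+d(y,K))+\|x-y\|\le 28\|x-y\|$. \textbf{Case 3:} if $x\in X\setminus K$, $d(y,K)\ge d(x,K)/2$, and $\|x-y\|\ge d(x,K)$, then $d(y,K)\le d(x,K)+\|x-y\|\le 2\|x-y\|$, and again Fact~\ref{npm} on both sides gives a constant multiple of $\|x-y\|$.

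The remaining case is \textbf{Case 4:} $x\in X\setminus K$, $d(y,K)\ge d(x,K)/2$, and $\|x-y\|<d(x,K)$. Here Proposition~\ref{proplip} applies directly to give
$$\|R(x)-R(y)\|\le 760\bigl(20^{n(d(x,K)/10)}+20^{n(d(y,K)/10)}\bigr)\|x-y\|.$$
Now I would use monotonicity of $n(\cdot)$: since $n(\ep)=\min\{n:r_n\le\ep\}$ is non-increasing in $\ep$, from $d(y,K)/10\ge d(x,K)/20$ and $d(x,K)/10\ge d(x,K)/20$ one gets both exponents bounded by $n(d(x,K)/20)$, yielding
$$\|R(x)-R(y)\|\le 1520\cdot 20^{n(d(x,K)/20)}\|x-y\|.$$

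Finally, since $\|x-y\|<d(x,K)$ implies $\|x-y\|/20<d(x,K)/20$, monotonicity gives $n(d(x,K)/20)\le n(\|x-y\|/20)$, and so $\|R(x)-R(y)\|\le 1520\cdot 20^{n(\|x-y\|/20)}\|x-y\|$. Combining all four cases and using that the constant $1520\cdot 20^{n(t/20)}$ dominates the constants $10, 28, 55$ from the first three cases (since $20^{n(t/20)}\ge 1$), the bound $\omega_R(t)\le 1520\cdot 20^{n(t/20)}t$ follows. The main subtlety, and really the only nontrivial step, is tracking the monotonicity of $n(\cdot)$ carefully in Case 4 to funnel both terms appearing in Proposition~\ref{proplip} into the single factor $20^{n(\|x-y\|/20)}$.
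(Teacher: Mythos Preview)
Your proof is correct and follows essentially the same four-case split as the paper's argument, using Fact~\ref{npm} for the crude cases and Proposition~\ref{proplip} together with the monotonicity of $n(\cdot)$ in the delicate case. The only cosmetic differences are your initial symmetry assumption $d(x,K)\ge d(y,K)$ (which is harmless but not actually used in an essential way) and your slightly more explicit tracking of the monotonicity argument and of why the Case~4 constant dominates the others.
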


\begin{remark}
We did not strive during this section to obtain the best estimate for $\omega_R(t)$ in Theorem \ref{mainan}. Very probably sharper estimates can be derived.
\end{remark}

Since all the previous constructions and results are proven for an arbitrary $(r_n)$-flat convex subset of an arbitrary Banach space, we may state and prove the following theorem.

\begin{theorem}\label{mainth2}
If $\alpha\in(0,1)$ then every $\big(20^{\frac{n}{\alpha-1}}\big)$-flat convex subset of a Banach space $X$ is an absolute $\alpha$-Hölder retract.
\begin{proof}
Since every metric space is isometrically embeddable into a Banach space, it is enough to find an $\alpha$-Hölder retraction from $X$ onto $K$. By Theorem \ref{mainan}, it is enough to show that  for $r_n=20^{\frac{n}{\alpha-1}}$ there is a constant $C>0$ such that for every $t>0$,
$$n(t/20)\le \log_{20}(t^{\alpha-1})+C.$$
It is immediate that in this case,
$$n(t)\le \min\{n\in\N\;:\;n\ge\log_{20}(t^{\alpha-1})\}\le\log_{20}(t^{\alpha-1})+1.$$
Therefore,
$$n(t/20)\le \log_{20}\big((t/20)^{\alpha-1}\big)+1=\log_{20}(t^{\alpha-1})+2-\alpha.$$
\end{proof}
\end{theorem}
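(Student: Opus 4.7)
The plan is to reduce the statement to Theorem \ref{mainan} and then carry out a short calculation that converts the bound $\omega_R(t) \le 1520 \cdot 20^{n(t/20)} t$ into a genuine $\alpha$-Hölder estimate for the specific sequence $r_n = 20^{n/(\alpha-1)}$.

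First, I would invoke the standard fact that every metric space embeds isometrically into a Banach space, so it suffices to construct an $\alpha$-Hölder retraction from the ambient Banach space $X$ onto $K$. The retraction $R$ built in the previous subsections is precisely such a candidate, and Theorem \ref{mainan} already supplies a universal estimate on its modulus of continuity in terms of the counting function $n(\cdot)$ attached to the flatness sequence $(r_n)$.

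The heart of the argument is then a short computation on $n(\cdot)$. The inequality $r_n \le \ep$ rewrites as $20^{n/(\alpha-1)} \le \ep$; taking $\log_{20}$ and dividing by the negative quantity $\alpha - 1$ reverses the inequality into $n \ge \log_{20}(\ep^{\alpha-1})$. This produces $n(\ep) \le \log_{20}(\ep^{\alpha-1}) + 1$, and setting $\ep = t/20$ yields a bound of the form $n(t/20) \le \log_{20}(t^{\alpha-1}) + C_\alpha$ with $C_\alpha$ depending only on $\alpha$. Exponentiating gives $20^{n(t/20)} \le 20^{C_\alpha} \cdot t^{\alpha-1}$, and substituting into Theorem \ref{mainan} immediately produces a bound of the form $\omega_R(t) \le C' t^\alpha$, which is exactly the definition of $\alpha$-Hölder continuity.

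There is no real obstacle here: the exponent $n/(\alpha-1)$ in the flatness hypothesis was tailor-made so that the $20^{n(\cdot)}$ growth factor in Theorem \ref{mainan} is absorbed into an extra loss of $t^{1-\alpha}$, compensating the trailing factor of $t$ to produce precisely the Hölder exponent $\alpha$. The only minor subtlety worth flagging is the sign flip when dividing by $\alpha - 1 < 0$, which is where the hypothesis $\alpha < 1$ enters decisively; any attempt to push the argument to $\alpha = 1$ would collapse the logarithmic bound on $n(\ep)$ and is consistent with the authors' remark that the Lipschitz case is likely to fail in general.
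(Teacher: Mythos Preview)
Your proposal is correct and follows essentially the same route as the paper's proof: reduce to an $\alpha$-H\"older retraction from $X$ via the isometric embedding fact, invoke Theorem~\ref{mainan}, and then compute $n(\ep)\le \log_{20}(\ep^{\alpha-1})+1$ by unwinding the inequality $20^{n/(\alpha-1)}\le \ep$ (using $\alpha-1<0$ to flip the sign), which after the substitution $\ep=t/20$ and exponentiation yields $\omega_R(t)\le C't^\alpha$. Your write-up even makes the final exponentiation and substitution steps explicit, which the paper leaves implicit.
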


\begin{remark}
A result like Theorem \ref{mainth2} cannot be expected for every compact convex set. In fact, if a compact convex set $K$ is an absolute $\alpha$-Hölder retract for every $\alpha<1$ then it is clearly an absolute Lipschitz retract. Hence, an extra assumption on the shape of $K$ is needed.
\end{remark}

Theorem \ref{mainth} follows now as a consequence of Theorem \ref{mainth2} and Example \ref{mainex}. We state it here in the form of a corollary.

\begin{corollary}\label{mainco}
For every separable Banach space $X$ and every $\alpha\in(0,1)$ there is a generating compact convex set $K$ of $X$ which is an $\alpha$-Hölder retract of $X$. Moreover, $K$ can be chosen to be flat.
\end{corollary}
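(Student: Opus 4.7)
The plan is to simply assemble Example \ref{mainex} and Theorem \ref{mainth2}. Fix $\alpha\in(0,1)$ and set
$$r_n = 20^{n/(\alpha-1)}.$$
Since $\alpha-1<0$, this is a decreasing null sequence of positive reals, and indeed $r_n=\beta^{-n}$ with $\beta=20^{1/(1-\alpha)}>20$.

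First I would invoke Example \ref{mainex} with this specific choice of $(r_n)$ and with any fundamental sequence $(e_n)\subset S_X$ of the given separable Banach space $X$. This produces a generating compact convex subset $K\subset X$ (for instance, the set $K_1$ from that example) which is $(r_n)$-flat with respect to $(e_n)$; generating here is immediate since the construction places a nondegenerate segment in the direction of each $e_n$.

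Next I would apply Theorem \ref{mainth2} to this very $K$: being $\big(20^{n/(\alpha-1)}\big)$-flat is exactly the hypothesis of that theorem, so $K$ is an absolute $\alpha$-Hölder retract, and in particular an $\alpha$-Hölder retract of $X$. This handles the first assertion.

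Finally, for the "moreover" clause, I would observe that $\beta=20^{1/(1-\alpha)}>20>3$, so one can choose $\varepsilon>0$ with $3+\varepsilon\leq\beta$, and then
$$r_n = \beta^{-n} \leq (3+\varepsilon)^{-n}\qquad\text{for every }n\in\N\cup\{0\}.$$
Hence the heights of $K$ satisfy $h_n^\beta\leq r_n\leq (3+\varepsilon)^{-n}$, which is precisely the definition of $K$ being flat in the sense of Section \ref{Holder}. There is no real obstacle here: the corollary is essentially a bookkeeping step combining the explicit flat set produced in Example \ref{mainex} with the modulus-of-continuity estimate packaged in Theorem \ref{mainth2}, the only thing worth checking being the numerical comparison $20^{1/(1-\alpha)}>3$ needed to upgrade $(r_n)$-flatness to flatness.
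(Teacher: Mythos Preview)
Your proposal is correct and follows exactly the route the paper indicates: the corollary is stated immediately after the remark that it ``follows now as a consequence of Theorem \ref{mainth2} and Example \ref{mainex}'', and your argument just spells out this combination together with the easy numerical check $20^{1/(1-\alpha)}>3$ needed for the ``moreover'' clause. The only cosmetic issue is that you use the symbol $\beta$ both for the fundamental sequence (as in $h_n^\beta$) and for the number $20^{1/(1-\alpha)}$; renaming one of them would avoid confusion.
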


This result is to some extent optimal and underlines the differences between Lipschitz and Hölder retractions. More precisely,

\begin{corollary}
For every separable Banach space $X$ without the bounded approximation property and for every $\alpha\in(0,1)$ there is a generating compact convex subset of $X$ which is an $\alpha$-Hölder retract of $X$ but not a Lipschitz retract of $X$.
\begin{proof}
From \cite{HM22} we know that a separable Banach space admitting a Lipschitz retraction onto a flat generating convex subset has the bounded approximation property. Hence, if $X$ does not enjoy the bounded approximation property then it is enough to consider the flat generating compact convex subset given by Corollary \ref{mainco}.
\end{proof}
\end{corollary}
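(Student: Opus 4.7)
My plan is to derive this statement as an immediate corollary of what has already been established, using a contrapositive argument based on a known result from \cite{HM22}. The key ingredients are already in place: Corollary \ref{mainco} provides, for the given $\alpha\in(0,1)$, a flat generating compact convex subset $K\subset X$ that is an $\alpha$-Hölder retract of $X$, and the separate result from \cite{HM22} asserts that if a separable Banach space admits a Lipschitz retraction onto a flat generating convex subset, then it must enjoy the bounded approximation property.

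The argument I would write consists of just two steps. First, I would invoke Corollary \ref{mainco} to produce a flat generating compact convex set $K\subset X$ together with an $\alpha$-Hölder retraction $R\colon X\to K$; this is guaranteed by the ``moreover'' clause of that corollary, which is precisely what makes the flatness hypothesis available in the next step. Second, I would assume for contradiction that $K$ is a Lipschitz retract of $X$. Then there would exist a Lipschitz retraction from $X$ onto a flat generating convex subset, so the cited theorem from \cite{HM22} would force $X$ to have the bounded approximation property, contradicting the hypothesis on $X$. Hence $K$ is not a Lipschitz retract of $X$, while by construction it is an $\alpha$-Hölder retract.

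There is essentially no obstacle here; the only work has already been done in Corollary \ref{mainco} (which provides a retract whose shape is controlled enough to qualify as ``flat'' in the sense of \cite{HM22}) and in the cited theorem of \cite{HM22}. The one point to double-check is that the flatness notion used in Corollary \ref{mainco} agrees with, or is strong enough to imply, the flatness notion for which the result from \cite{HM22} applies; since the present paper remarks that its definition of flat is \emph{slightly stronger} than the one in \cite{HM22}, this compatibility is automatic and no further argument is needed.
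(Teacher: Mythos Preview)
Your proposal is correct and follows essentially the same approach as the paper: invoke Corollary \ref{mainco} to obtain a flat generating compact convex $\alpha$-H\"older retract, and then use the result from \cite{HM22} (by contrapositive) to conclude that this set cannot be a Lipschitz retract of $X$. Your additional remark checking that the flatness notion here is at least as strong as the one in \cite{HM22} is a reasonable precaution and is indeed supported by the paper's earlier comment on the definitions.
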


Let us remind that by a famous example of Enflo \cite{Enf73} we know of the existence of separable Banach spaces without the bounded approximation property.

The careful reader may have noticed that the definition of the retraction in this section is based on nearest point maps. In fact, the retraction is defined as an average of nearest point maps onto some carefully chosen nets, where each set of our locally finite covering of $X\setminus K$ had as a target its nearest point from a net of a finite dimensional section of $K$. In the following section we will see that the mentioned averaging process seems to be crucial.

\section{Nearest point map}\label{NPM}

In \cite{HM21},  it is proved that the nearest point map from any bounded subset of $X$ onto a compact convex subset $K$ of $X$ is uniformly continuous provided the norm in $X$ is uniformly rotund in the direction $x$ for every $x\in S_X\cap[K]$. The natural conjecture is that this result can not be pushed for rotund norms, not even when the targeted subset is a finite dimensional compact convex set. The goal of this section is to provide an example of a strictly convex Banach space $X$ and a finite dimensional compact convex subset $K$ of $X$ such that the nearest point map from $X$ onto $K$ is not uniformly continuous even when restricted to bounded sets. More precisely, we are going to prove the next stronger result.

\begin{theorem}\label{mainNPM}
For every $\delta>0$ and $K$ nontrivial finite dimensional compact convex susbet of $\ell_2$ there is a rotund norm $|\cdot|$ in $\ell_2$ and an element $p\in K$ such that
$$d_{BM}\big((\ell_2,|\cdot|),(\ell_2,||\cdot||_2)\big)<\delta,$$
and the nearest point map from $\ell_2$ onto $K$ given by the norm $||\cdot||$ is not uniformly continuous even when restricted to $p+\ep B_{\ell_2}$ for any $\ep>0$.
\end{theorem}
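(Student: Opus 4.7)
After an affine translation we may assume the ``bad'' point $p$ is $0$. Since any nontrivial finite-dimensional compact convex $K$ contains a nondegenerate segment through some boundary point, I would first establish the result for the simplest model ($K$ a segment, or a planar disk, in $E_m:=\mathrm{span}(e_1,\dots,e_m)$) and then deduce the general case by composing with an auxiliary retraction or by placing a translate of the model $K$ inside the given one, handling the degeneracy of $p$ inside $K$ separately. So suppose $K\subset E_m$ and $p\in K$.

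\textbf{Construction of the norm.} I would construct $|\cdot|$ as a quadratic perturbation of the Hilbert norm,
$$|x|^{2}\;=\;\|x\|_{2}^{2}\;+\;\sum_{n=1}^{\infty}c_{n}\,\langle x,v_{n}\rangle^{2},$$
where $(c_n)\subset\R^+$ is summable with $\sum_n c_n\|v_n\|_2^2<\delta^{2}$, and the vectors $v_n\in\ell_2$ mix a fixed direction $\xi\in E_m$ (encoding a supporting direction of $K$ at $p$) with the ``free'' direction $e_{m+n}$, e.g.\ $v_n=\xi+\alpha_n e_{m+n}$ for suitable $\alpha_n$. The form $|\cdot|^{2}$ is a strictly positive definite infinite sum of positive semidefinite quadratic forms whose leading term is already strictly convex, so $|\cdot|$ is rotund. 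The smallness of $\sum_n c_n\|v_n\|_2^{2}$ gives $d_{BM}\big((\ell_2,|\cdot|),(\ell_2,\|\cdot\|_2)\big)<\delta$ by a straightforward estimate of $|x|^2/\|x\|_2^2$.

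\textbf{Bad sequences and failure of uniform continuity.} Denote by $R$ the nearest-point map onto $K$ in the norm $|\cdot|$. The idea is to exploit the orthogonal directions: for each $n$, the term $c_n\langle x,v_n\rangle^{2}$ couples the ``free'' coordinate $x_{m+n}$ to the $E_m$-direction $\xi$, so that in the dual norm $|\cdot|^{*}$, the $|\cdot|$-normalizing functional of a vector $h_n:=\eta_n e_{m+n}$ acquires a nontrivial $\xi$-component. Translating via the first-order optimality condition $R(x)=q\iff x-q\in\partial(|\cdot-q|)(q)$ being a normal functional to $K$ at $q$, one arranges that the nearest point of $x_n:=p+\eta_n e_{m+n}+\tau\xi$ lies on one side of $p$ along the segment through $p$ in direction $\xi$, while for $y_n:=p+\eta_n e_{m+n}-\tau\xi$ (or a variant flipping the sign of the $e_{m+n}$ component) it lies on the opposite side, with a uniform gap $\geq c>0$. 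By choosing $\eta_n\to0$ and $\tau>0$ small but fixed, both $x_n$ and $y_n$ live in $p+\ep B_{\ell_2}$, $|x_n-y_n|\to 0$, yet $|R(x_n)-R(y_n)|\geq c$ for all $n$.

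\textbf{Main obstacle.} The delicate point is the last step: precisely controlling $R(x_n)$ in the absence of a closed-form minimizer. The plan is to work exclusively with the dual characterization of nearest points, showing that the map $x\mapsto D|\cdot|(x-R(x))$ (the dual normalizing functional) must support $K$ at $R(x)$, and engineering the $v_n$ so that this dual functional rotates by a definite amount as one moves between $x_n$ and $y_n$ along the $e_{m+n}$ direction. Balancing the three simultaneous demands---rotundity of $|\cdot|$, arbitrarily small Banach--Mazur distance to $\|\cdot\|_2$, and a uniformly large swing of the dual normalizing functional for arbitrarily small $|\cdot|$-perturbations near $p$---is where the bulk of the work lies, and is what forces the construction to be infinite-dimensional in an essential way (in any fixed finite-dimensional subspace the nearest-point map is continuous, hence uniformly continuous on bounded sets).
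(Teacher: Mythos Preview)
Your proposal has a fatal structural error. The norm you define,
\[
|x|^{2}=\|x\|_{2}^{2}+\sum_{n}c_{n}\langle x,v_{n}\rangle^{2},
\]
arises from the bounded symmetric bilinear form
\[
(x,y)\longmapsto \langle x,y\rangle+\sum_{n}c_{n}\langle x,v_{n}\rangle\langle y,v_{n}\rangle=\langle Ax,y\rangle,\qquad A=I+\sum_{n}c_{n}\,v_{n}\otimes v_{n},
\]
which is a genuine inner product on $\ell_{2}$ (positive definite since $A\ge I$). Thus $(\ell_{2},|\cdot|)$ is \emph{isometrically a Hilbert space}, and by Phelps' theorem---the very result quoted in the introduction---the nearest point map onto any closed convex subset is $1$-Lipschitz for $|\cdot|$. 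Your construction therefore cannot yield a nearest point map that fails to be uniformly continuous; the ``uniformly large swing of the dual normalizing functional for arbitrarily small perturbations'' that you identify as the main obstacle is in fact impossible for this norm. Any quadratic-form perturbation of $\|\cdot\|_{2}$ is ruled out for the same reason.

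The paper's construction is essentially non-quadratic and this is not incidental. It builds the unit ball $B$ as a union $\bigcup_{n}B_{n}$, where each $B_{n}$ is a rotund smoothing of $\cco\big(B_{\ell_{2}}\cup\{\pm z_{1,n},\pm z_{2,n}\}\big)$ and the ``spikes'' $z_{i,n}=e_{2n}+(-1)^{i+1}\tfrac{\delta}{n}e_{2n+1}+(-1)^{i}\delta e_{1}$ live near pairwise $\|\cdot\|_{2}$-separated points $e_{2n}$. This separation is what makes the union again convex and rotund (Proposition~\ref{indexB}), while the spikes force the modulus of convexity to degenerate in the direction $e_{1}$: for the test pairs $x_{i,n}=e_{2n}+(-1)^{i+1}\tfrac{\delta}{n}e_{2n+1}$ one has $\|x_{1,n}-x_{2,n}\|\to 0$, yet a slice argument (Lemma~\ref{mainlemmaNPM}) pins the displacement $x_{i,n}-R(\ep x_{i,n})$ near $z_{i,n}$, so that $R(\ep x_{1,n})$ and $R(\ep x_{2,n})$ land on opposite ends of the segment $K\cap[e_{1}]$. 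The reduction to a general finite-dimensional $K$ is then a single isometry of $\ell_{2}$, not an auxiliary retraction. If you want to salvage your outline, the perturbation must be genuinely non-Hilbertian; merely coupling coordinates through rank-one quadratic terms will never break the nonexpansiveness of the metric projection.
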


Here $d_{BM}$ stands for the Banach mazur distance, that is, for every pair of isomorphic Banach spaces $X$, $Y$ we have,
$$d_{BM}(X,Y)=\inf\big\{\ln\big(||T||\cdot||T^{-1}||\big)\;:\;T\in\mathcal{L}(X,Y)\text{ is an isomorphism}\big\}.$$

It is worth mentioning that the nearest point map from a strictly convex Banach space onto a closed convex subset is unique. Also, it is straightforward to check that the nearest point map to a compact set is continuous whenever it is unique. Hence, the nearest point map mentioned in Theorem \ref{mainNPM} is continuous but not uniformly continuous.

During this section, given a convex subset $C$ of a Banach space $X$, we will denote $\text{Int}(C)$ for the topological interior with the topology of $[C]$ inherited by the norm topology of $X$.

\subsection{The construction.}

Let us denote $(e_n)$ the canonical basis of $\ell_2$. We first state and prove the following lemma concerning the geometry of $B_{\ell_2}$. From now on, $||\cdot||_2$ denotes the $\ell_2$ norm and $\langle\cdot,\cdot\rangle$ stands for the canonical inner product in $\ell_2$.

\begin{lemma}\label{geo2}
Let $P\subset \ell_2\setminus B_{\ell_2}$. Then,
$$\sup\limits_{x\in \cco(P\cup B_{\ell_2})\setminus B_{\ell_2}}d_2\big(x,\co(P)\big)\le \sqrt{\sup||P||_2^2-1},$$
where $d_2$ is the distance for the $\ell_2$ norm.
\begin{proof}
Let us take  $x\in \co\big(P\cup B_{\ell_2}\big)\setminus B_{\ell_2}$. Then, by definiton, there is $y\in \co(P)$, $z\in S_{\ell_2}$ and $\lambda\in (0,1]$ such that
$$x=(1-\lambda)z+\lambda y.$$
Moreover, since $x\notin B_{\ell_2}$ we may assume that $z$ is chosen so that the segment connecting $x$ and $z$ does not intersect $\text{Int}\big(B_{\ell_2}\big)$. Hence,  $(1-t)z+ty\notin \text{Int}\big(B_{\ell_2}\big)$ for every $t\in(0,\lambda]$. Then, for every $t\in(0,\lambda]$ we have that
$$1\le||(1-t)z+ty||_2^2=(1-t)^2+t^2||y||_2^2+2t(1-t)\langle y,z \rangle.$$
Equivalently, we get that for each $t\in(0,\lambda]$,
\begin{equation}\label{limitt}\langle y,z \rangle\ge\frac{2t-t^2(||y||_2^2+1)}{(2t-2t^2)}.\end{equation}
Now we take limit when $t\to0^+$ in \eqref{limitt} and get that $\langle y,z\rangle\ge1.$ Hence,
$$d_2(x,\co(P))^2\le||y-x||_2^2\le||y-z||_2^2=||y||_2^2+1-2\langle y,z\rangle\le||y||_2^2-1\le\sup||P||_2^2-1.$$
Since $x\in \co\big(P\cup B_{\ell_2}\big)\setminus B_{\ell_2}$ was arbitrary we get the desired inequality.
\end{proof}
\end{lemma}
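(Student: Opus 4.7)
My plan is to exploit the Hilbert structure of $\ell_2$ through a short derivative-at-zero computation after a standard convex-geometric preparation.

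Since the map $w\mapsto d_2(w,\co(P))$ is $1$-Lipschitz and $\co(P\cup B_{\ell_2})$ is dense in $\cco(P\cup B_{\ell_2})$, it suffices to bound $d_2(x,\co(P))$ for $x\in \co(P\cup B_{\ell_2})\setminus B_{\ell_2}$. For such an $x$, I would write $x=(1-\lambda)z+\lambda y$ with $y\in\co(P)$, $z\in B_{\ell_2}$, and $\lambda\in(0,1]$. The geometric preparation is to replace $z$ by the (unique) point where the line through $y$ and $x$, extended past $x$, first meets $S_{\ell_2}$; after possibly shrinking $\lambda$, this gives $\|z\|_2=1$ and the stronger property that the whole segment from $z$ to $x$ lies outside the open unit ball, i.e.\ $\|(1-t)z+ty\|_2\ge 1$ for every $t\in(0,\lambda]$.

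The Hilbert content is then squeezed out by expanding this last inequality. Using $\|z\|_2=1$ it becomes $-2t+t^2+t^2\|y\|_2^2+2t(1-t)\langle y,z\rangle \ge 0$; dividing by $t>0$ and letting $t\to 0^+$ yields $\langle y,z\rangle\ge 1$. Since $\|z\|_2=1$, the parallelogram identity then gives $\|y-z\|_2^2=\|y\|_2^2+1-2\langle y,z\rangle\le \|y\|_2^2-1\le \sup\|P\|_2^2-1$. As $y\in\co(P)$, we conclude $d_2(x,\co(P))\le \|x-y\|_2=(1-\lambda)\|y-z\|_2\le\sqrt{\sup\|P\|_2^2-1}$, which is the required bound after taking the supremum over $x$.

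The only mildly delicate point is the preparation step, which amounts to a continuity/intermediate-value argument on the segment parametrization and uses no Hilbert structure. Everything else is the familiar observation that a supporting hyperplane to $B_{\ell_2}$ at a sphere point $z$ has outward normal $z$, rediscovered here by differentiating the quadratic $t\mapsto\|(1-t)z+ty\|_2^2$ at $t=0$. The hypothesis $P\subset \ell_2\setminus B_{\ell_2}$ makes the right-hand side nonnegative, so the estimate is never vacuously violated.
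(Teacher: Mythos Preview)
Your proof is correct and follows essentially the same route as the paper: write $x=(1-\lambda)z+\lambda y$ with $y\in\co(P)$, push $z$ to the sphere so that the segment $[z,x]$ avoids the open ball, expand $\|(1-t)z+ty\|_2^2\ge 1$ and let $t\to 0^+$ to obtain $\langle y,z\rangle\ge 1$, then bound $\|x-y\|_2\le\|y-z\|_2$. The only cosmetic differences are that you make the density reduction from $\cco$ to $\co$ explicit and phrase the limiting step as a derivative computation rather than isolating $\langle y,z\rangle$ first.
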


We will use the following concept in the setting of $\ell_2$.

\begin{definition}
The modulus of convexity of a Banach space $(X,||\cdot||)$ is a function $\delta:[0,2]\to[0,1]$ given by
$$\delta(\ep)=\inf\Big\{1-\Big|\Big| \frac{x+y}{2} \Big|\Big|\;:\;x,y\in S_X,\;||x-y||\ge\ep\Big\},$$
where $S_X$ denotes the unit sphere of $(X,||\cdot||)$.
\end{definition}

Let us remind the following well known fact that will also be used later.

\begin{fact}\label{modcon}
If $\delta_2:[0,2]\to[0,1]$ denotes the modulus of convexity of $(\ell_2,||\cdot||_2)$ then
$$\delta_2(\ep)=1-\sqrt{1-\Big(\frac{\ep}{2}\Big)^2}\;\;\;\;\;\forall\ep\in[0,2].$$
\end{fact}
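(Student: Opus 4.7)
The plan is to prove both inequalities $\delta_2(\ep)\geq 1-\sqrt{1-(\ep/2)^2}$ and $\delta_2(\ep)\leq 1-\sqrt{1-(\ep/2)^2}$ by exploiting the parallelogram identity, which is the sole ingredient making this computation available in closed form for Hilbert spaces.

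For the lower bound on $\delta_2(\ep)$, I would fix arbitrary $x,y\in S_{\ell_2}$ with $\|x-y\|_2\geq \ep$ and apply the parallelogram identity
$$\|x+y\|_2^2+\|x-y\|_2^2=2\|x\|_2^2+2\|y\|_2^2=4.$$
This gives $\|x+y\|_2^2\leq 4-\ep^2$, so $\|(x+y)/2\|_2\leq \sqrt{1-(\ep/2)^2}$, hence $1-\|(x+y)/2\|_2\geq 1-\sqrt{1-(\ep/2)^2}$. Taking the infimum over all admissible pairs yields the inequality $\delta_2(\ep)\geq 1-\sqrt{1-(\ep/2)^2}$.

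For the matching upper bound, I would exhibit a concrete extremal pair realizing equality. A natural choice is
$$x=\sqrt{1-(\ep/2)^2}\,e_1+(\ep/2)\,e_2,\qquad y=\sqrt{1-(\ep/2)^2}\,e_1-(\ep/2)\,e_2,$$
for which a direct computation gives $\|x\|_2=\|y\|_2=1$, $\|x-y\|_2=\ep$, and $\|(x+y)/2\|_2=\sqrt{1-(\ep/2)^2}$. Plugging into the definition of $\delta_2$ yields $\delta_2(\ep)\leq 1-\sqrt{1-(\ep/2)^2}$, which together with the previous step closes the argument.

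There is essentially no obstacle here: the whole statement hinges on the parallelogram identity, and once that is invoked the lower bound is immediate and the upper bound reduces to checking that the above two-dimensional witness attains equality. The only mild care needed is to note that the infimum in the definition of $\delta_2(\ep)$ is over pairs with $\|x-y\|_2\geq \ep$, not $=\ep$, but since the function $t\mapsto 1-\sqrt{1-(t/2)^2}$ is nondecreasing on $[0,2]$ the lower bound step is unaffected.
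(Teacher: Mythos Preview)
Your argument is correct and complete: the parallelogram identity gives the lower bound, and the explicit two-dimensional witness gives the matching upper bound. Note that the paper does not actually prove this fact; it is stated as a well-known reminder without proof, so there is no approach to compare against.
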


%Now that we know the setting space in which we are going to work, let us mention how the compact $K$ is going to be. From now on, $K$ is a compact convex subset of $\ell_2$ such that $K\subset c_{00}$ and $\text{diam}(K\cap[e_1])>0$. An example of such a set is the closed segment between $e_1$ and $-e_1$. 

For later purposes we are going to need a positive `deviation parameter' $\delta\le1/48$. Then, it is straightforward to see that $0<(1+2\delta)^2-(1-10\delta)^2<1$. Hence,
\begin{equation}\label{deltaineq}(1+2\delta)\sqrt{1-\Big(\frac{1-10\delta}{1+2\delta}\Big)^2}<1.\end{equation}
Let us start with the construction of the rotund norm. We define for every $n\in\N$ and $i=1,2$ the elements
$$x_{i,n}=e_{2n}+(-1)^{i+1}\frac{\delta}{n}e_{2n+1}\;\;\;,\;\;\;z_{i,n}=x_{i,n}+(-1)^i\delta e_{1}.$$
Now, for every $n\in\N$ we consider the sets 
$$P_n=\{\pm z_{1,n},\pm z_{2,n}\}\;\;\;,\;\;\;\widetilde B_n=\cco\big(P_n\cup B_{\ell_2}\big).$$
The set $\widetilde B_n$ clearly produces a norm $||\cdot||_n$ in $\ell_2$ for every $n\in \N$. It is straightforward to check that
\begin{equation}\label{baseequiv}\frac{1}{1+2\delta}||x||_2\le||x||_n\le||x||_2\;\;\;\forall x\in \ell_2,\;\;\;\forall n\in\N.\end{equation}

\begin{lemma}\label{closed}
Let $n,m\in \N$  be such that $n\neq m$. If $x\in \widetilde B_n\setminus B_{\ell_2}$ and $y\in \widetilde B_m\setminus B_{\ell_2}$ then $||x-y||_2\ge1-10\delta$.
\begin{proof}
Let us define for every $n\in\N$ and $i=1,2$ the sets
$$P^i_n=\big\{(-1)^iz_{1,n},(-1)^iz_{2,n}\big\}\;\;\;,\;\;\;\widetilde B^i_n=\cco\big(B_{\ell_2}\cup P^i_n\big).$$
Then, we claim that for every $n\in\N$,
\begin{equation}\label{counionsign}\widetilde B_n=\widetilde B^1_n\cup\widetilde B^2_n.\end{equation}
Indeed, the containment $\widetilde B^1_n\cup\widetilde B^2_n\subset \widetilde B_n$ is immediate, so we focus on the other containment. If we take $p\in \co\big(P^1_n\cup P^2_n\cup B_{\ell_2}\big)$ then by definition there are $\lambda_1,\lambda_2,\lambda_3\in[0,1]$ with $\sum\limits_{i=1}^3\lambda_i=1$ and there is $(p_1,p_2,p_3)\in P^1_n\times P^2_n\times B_{\ell_2}$ such that $p=\lambda_1p_1+\lambda_2p_2+\lambda_3p_3$. Let us take $i,j\in\{1,2\}$ such that $i\neq j$ and $\lambda_i-\lambda_j\le0$. Clearly $0\le2\lambda_i\le1$ and $0\le\lambda_j-\lambda_i\le1$. Therefore, the righ hand side of the following equality is a convex combination,
$$p=2\lambda_i\frac{p_1+p_2}{2}+(\lambda_j-\lambda_i)p_j+\lambda_3p_3.$$
That is, $p\in \co\big(\big\{p_i,\frac{p_1+p_2}{2},p_3\big\}\big)$.  It is straightforward to see that $\frac{p_1+p_2}{2}\in \frac{P^1_n+P^2_n}{2}\subset B_{\ell_2}$. Hence, $x\in \co\big(\{p_i\}\cup B_{\ell_2}\big)\subset \widetilde B^i_n$, proving \eqref{counionsign}.

Now, let us take $n,m\in\N$ with $n\neq m$ and assume that $x\in \widetilde B_n\setminus B_{\ell_2}$ and $y\in \widetilde B_m\setminus B_{\ell_2}$. By \eqref{counionsign} we know that $x\in \big(\widetilde B^1_n\setminus B_{\ell_2}\big)\cup\big(\widetilde B^2_n\setminus B_{\ell_2}\big)$ and $y\in \big(\widetilde B^1_m\setminus B_{\ell_2}\big)\cup\big(\widetilde B^2_m\setminus B_{\ell_2}\big)$. Let $i,j\in\{1,2\}$ be such that $x\in \widetilde B^i_n\setminus B_{\ell_2}$ and $y\in \widetilde B^j_m\setminus B_{\ell_2}$. Using Lemma \ref{geo2} we obtain that
$$d_2\big(x,P^i_n\big)\le d_2\big(x,\co(P^i_n)\big)+\text{diam}(P^i_n)\le\sqrt{\sup||P^i_n||_2^2-1}+\sqrt{8}\delta\le5\delta.$$
Analogously we get that
$$d_2\big(y,P^j_m\big)\le 5\delta.$$
Hence, by the triangle inequality we conclude that
$$||x-y||_2\ge d_2(P^i_n,P^j_m)-\big( d_2(x,P^i_n)+d_2(y,P^j_m) \big)\ge1-10\delta.$$
\end{proof} 
\end{lemma}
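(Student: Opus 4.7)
My plan is to reduce the problem to a distance computation between well-separated finite sets. The key obstruction to applying Lemma \ref{geo2} directly with $P = P_n$ is that $P_n = \{\pm z_{1,n}, \pm z_{2,n}\}$ is symmetric about the origin, so $\co(P_n)$ contains $0 \in \text{Int}(B_{\ell_2})$ and the bound one gets is not useful. I would therefore split $P_n = P^1_n \cup P^2_n$ with $P^i_n := \{(-1)^i z_{1,n}, (-1)^i z_{2,n}\}$ (each sitting on one side of the origin) and first establish the decomposition
$$\widetilde B_n = \widetilde B^1_n \cup \widetilde B^2_n, \qquad \widetilde B^i_n := \cco(B_{\ell_2} \cup P^i_n).$$

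For the nontrivial forward inclusion I would work at the level of finite convex combinations. Given $p = \lambda_1 p_1 + \lambda_2 p_2 + \lambda_3 p_3$ with $p_1 \in P^1_n$, $p_2 \in P^2_n$, $p_3 \in B_{\ell_2}$, the identity $P^1_n = -P^2_n$ together with a routine check using $\delta \leq 1/48$ gives $(P^1_n + P^2_n)/2 \subset B_{\ell_2}$. Assuming WLOG $\lambda_1 \leq \lambda_2$, the rewriting
$$p = 2\lambda_1 \cdot \tfrac{p_1 + p_2}{2} + (\lambda_2 - \lambda_1) p_2 + \lambda_3 p_3$$
presents $p$ as a convex combination of $p_2 \in P^2_n$ and two points of $B_{\ell_2}$, hence $p \in \widetilde B^2_n$. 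Since $\widetilde B^1_n \cup \widetilde B^2_n$ is closed, passing to the closure handles the general element of $\widetilde B_n$.

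With the decomposition in hand, for $x \in \widetilde B_n \setminus B_{\ell_2}$ I would pick $i$ with $x \in \widetilde B^i_n \setminus B_{\ell_2}$. A direct computation gives $\|z_{k,n}\|_2^2 = 1 + \delta^2 + \delta^2/n^2$, so Lemma \ref{geo2} applied to $P^i_n$ yields $d_2(x, \co(P^i_n)) \leq \delta\sqrt 2$, and since $\co(P^i_n)$ is a segment of diameter at most $2\delta\sqrt 2$, the triangle inequality gives $d_2(x, P^i_n) \leq 5\delta$. The same bound holds for $y$ and some $P^j_m$. Finally, for $n \neq m$ any $p \in P^i_n$ and $q \in P^j_m$ have $e_{2n}$- and $e_{2m}$-components equal to $\pm 1$ in orthogonal slots, so $\|p - q\|_2 \geq \sqrt 2$, and the triangle inequality yields
$$\|x - y\|_2 \geq d_2(P^i_n, P^j_m) - d_2(x, P^i_n) - d_2(y, P^j_m) \geq \sqrt 2 - 10\delta \geq 1 - 10\delta.$$

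The main obstacle I anticipate is the decomposition $\widetilde B_n = \widetilde B^1_n \cup \widetilde B^2_n$: this is the structural step that lets Lemma \ref{geo2} be used effectively, and the "averaging out into $B_{\ell_2}$" trick depends critically on the antipodal structure of $P_n$. Everything afterward is bookkeeping with the explicit formulas for the $z_{i,n}$.
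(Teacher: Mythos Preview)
Your proof is correct and follows essentially the same route as the paper: the same splitting $P_n=P^1_n\cup P^2_n$, the same decomposition $\widetilde B_n=\widetilde B^1_n\cup\widetilde B^2_n$ via the ``averaging into $B_{\ell_2}$'' rewriting, the same application of Lemma~\ref{geo2} to bound $d_2(x,P^i_n)\le 5\delta$, and the same final triangle inequality. Your observation that $d_2(P^i_n,P^j_m)\ge\sqrt{2}$ (rather than just $\ge 1$) is a slight sharpening of the paper's last step, but the strategy is identical.
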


\begin{prop}\label{indexB} If for every $n\in\N$ there is a closed convex symmetric subset $O_n$ of $\ell_2$ such that $B_{\ell_2}\subset O_n\subset \widetilde B_n$ then $\bigcup_{n\in\N}O_n$ is closed, convex and symmetric. Moreover, if $O_n$ is strictly convex then $\bigcup_{n\in\N}O_n$ is also strictly convex.
\begin{proof}
Let us denote $O=\bigcup_n O_n$. The set $O$ is symmetric since $O_n$ is symmetric for every $n\in\N$. To prove that $O$ is closed let us take a sequence $(p_n)\subset O$ such that $p_n\xrightarrow{\ell_2}p\in \ell_2$. If there is a subsequence $p_{\sigma(n)}$ contained in $B_{\ell_2}$ then $p\in B_{\ell_2}\subset O$. Otherwise, we may assume that $p_n\notin B_{\ell_2}$ for every $n\in\N$. We claim that there is $N\in\N$ such that $(p_n)\subset \bigcup\limits_{n=1}^NO_n$. Indeed, if this is not the case then there is a subsequence of $(p_n)$ which we represent by $(p_n)$ and a strictly increasing $\sigma:\N\to\N$ such that $p_n\in O_{\sigma(n)}\setminus B_{\ell_2}$ for every $n\in\N$. Since $O_{\sigma(n)}\subset \widetilde B_{\sigma(n)}$, by Lemma \ref{closed} and the inequality \eqref{deltaineq} we have that $||p_n-p_m||_2\ge1-10\delta>0$ for every $n,m\in\N$ with $n\neq m$. This is impossible because $(p_n)$ converges to $p$. Hence, there is $N\in\N$ such that $(p_n)\subset \bigcup\limits_{n=1}^NO_n$ and therefore $p\in\bigcup\limits_{n=1}^NO_n$. This proves that $O$ is closed and it only remains to prove that $O$ is convex.

Let us take two distinct points $x,y\in O$. If $x,y\in O_n$ for some $n\in\N$ then $\frac{x+y}{2}\in O_n\subset O$ (if $O_n$ is supposed to be strictly convex then $\frac{x+y}{2}\in \text{Int}(O_n)\subset \text{Int}(O)$). Otherwise, there are $n,m\in\N$ with $n\neq m$ such that $x\in O_n\setminus O_m$ and $y\in O_m\setminus O_n$. Hence, $x\in \widetilde B_n\setminus B_{\ell_2}$ and $y\in \widetilde B_m\setminus B_{\ell_2}$. Thus, by Lemma \ref{closed} it follows that $||x-y||_2\ge1-10\delta$. From \eqref{baseequiv} we know that $||x||_2,||y||_2\le1+2\delta$ so that $\widetilde x:=\frac{x}{1+2\delta},\widetilde y:=\frac{y}{1+2\delta}\in B_{\ell_2}$ with $||\widetilde x-\widetilde y||\ge\frac{1-10\delta}{1+2\delta}$. Hence, by the defintion of the modulus $\delta_2$, Fact \ref{modcon} and inequality \eqref{deltaineq},
$$\begin{aligned}\Big|\Big|\frac{ x+y}{2}\Big|\Big|_2=(1+2\delta)\Big|\Big|\frac{\widetilde x+\widetilde y}{2}\Big|\Big|_2\le&(1+2\delta)\bigg(1-\delta_2\Big( \frac{1-10\delta}{1+2\delta} \Big)\bigg)\\=&(1+2\delta)\sqrt{1-\Big(\frac{1-10\delta}{1+2\delta}\Big)^2}<1.
\end{aligned}$$
This means that $\frac{x+y}{2}\in\text{Int}(B_{\ell_2})\subset \text{Int}(O)$.
\end{proof}
\end{prop}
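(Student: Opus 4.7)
My plan is to verify the three properties (symmetric, closed, convex) for $O := \bigcup_n O_n$ in turn, and then handle the strict convexity case as a refinement of the convexity argument. Symmetry will be immediate from the symmetry of each $O_n$. The crucial insight, which I expect to carry both the closedness and convexity arguments, is that by Lemma \ref{closed} any two points of $O$ that lie outside $B_{\ell_2}$ in distinct sets $\widetilde B_n, \widetilde B_m$ must be at $\ell_2$-distance at least $1 - 10\delta$; combined with the bound $\|x\|_2 \le 1 + 2\delta$ from \eqref{baseequiv}, this geometric separation is what makes the union well-behaved, despite being an uncountable(-ish) conglomerate of convex pieces.

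For closedness, I would take a convergent sequence $(p_k) \subset O$ with $p_k \to p$ in $\ell_2$. If infinitely many terms lie in $B_{\ell_2}$ the conclusion is trivial, so I assume all $p_k \notin B_{\ell_2}$ with each $p_k \in O_{n_k}$. If the index set $\{n_k\}$ were unbounded, Lemma \ref{closed} would force $\|p_k - p_j\|_2 \ge 1 - 10\delta$ whenever $n_k \ne n_j$, contradicting that $(p_k)$ is Cauchy. Hence the tail of the sequence lives in a finite union $\bigcup_{n \le N} O_n$, which is closed as a finite union of closed sets, so $p \in O$.

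For convexity, let $x,y \in O$ be distinct. If both lie in some common $O_n$, their midpoint is in $O_n \subset O$, and in the strictly convex case it lies in $\text{Int}(O_n) \subset \text{Int}(O)$. Otherwise, since $B_{\ell_2} \subset O_n$ for every $n$, one must have $x \in \widetilde B_n \setminus B_{\ell_2}$ and $y \in \widetilde B_m \setminus B_{\ell_2}$ for some $n \ne m$. I would then normalize: set $\tilde x = x/(1+2\delta)$ and $\tilde y = y/(1+2\delta)$, both in $B_{\ell_2}$, with $\|\tilde x - \tilde y\|_2 \ge (1-10\delta)/(1+2\delta)$ by Lemma \ref{closed}. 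The explicit $\ell_2$-modulus of convexity (Fact \ref{modcon}) bounds $\|(\tilde x + \tilde y)/2\|_2$, and multiplying back by $1+2\delta$ yields $\|(x+y)/2\|_2 < 1$ precisely because of the calibrated inequality \eqref{deltaineq}. This places $(x+y)/2$ in $\text{Int}(B_{\ell_2}) \subset \text{Int}(O)$, settling both convexity and strict convexity in one stroke.

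The main obstacle, I expect, is this cross-convexity step where $x$ and $y$ come from different sets: Lemma \ref{closed} supplies the right lower bound on $\|x-y\|_2$, but converting it into the strict inequality $\|(x+y)/2\|_2 < 1$ depends entirely on the numerical calibration done in \eqref{deltaineq}. This is where the restriction $\delta \le 1/48$ really earns its keep and why the perturbations $z_{i,n} = x_{i,n} + (-1)^i \delta e_1$ defining the sets $\widetilde B_n$ had to be chosen so delicately in the first place.
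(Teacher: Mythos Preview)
Your proposal is correct and follows essentially the same route as the paper's proof: symmetry is immediate; closedness is obtained by arguing that a convergent sequence outside $B_{\ell_2}$ can meet only finitely many $O_n$ (otherwise Lemma~\ref{closed} produces a $(1-10\delta)$-separated subsequence); and convexity (together with strict convexity) is handled by the same dichotomy, with the cross-case settled by normalizing into $B_{\ell_2}$, invoking Fact~\ref{modcon}, and applying \eqref{deltaineq} to land the midpoint in $\text{Int}(B_{\ell_2})$. The only implicit step, in both your outline and the paper, is that midpoint convexity of a closed set yields full convexity.
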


Now, we define for every $n\in\N$ a new norm in $\ell_2$ as $|\cdot|_n:\ell_2\to[0,\infty)$ given by
$$|x|_n=\Big(1-\frac{\mu}{n^2}\Big)||x||_n+\frac{\mu}{n^2}||x||_{2},$$
where $0<\mu\le\frac{\delta^3}{132(1+2\delta)}$.
Clearly, the norm $|\cdot|_n$ is rotund for every $n\in\N$. We denote $B_n=\{x\in \ell_2\;:\;|x|_n\le1\}$ the strictly convex unit ball for the norm $|\cdot|_n$.
From \eqref{baseequiv} we get that
$$\frac{1}{1+2\delta}||x||_2\le||x||_n\le|x|_n\le||x||_2\;\;\;\forall x\in \ell_2,\;\;\;\forall n\in\N.$$
Hence, for every $n\in\N$,
\begin{equation}\label{equivnfine}B_{\ell_2}\subset B_n\subset \widetilde B_n\subset (1+2\delta)B_{\ell_2}\end{equation}
Let us denote for each $n\in\N$ and $i=1,2$ the functional
$$f^*_{i,n}=e^*_{2n}+(-1)^{i+1}\frac{\delta}{2n}e_{2n+1}^*,$$
where $e^*_n(\cdot)=\langle e_n,\cdot\rangle$ for every $n\in\N$. 

\begin{lemma}\label{mainlemmaNPM}
Let $n\in\N$ and $i\in\{1,2\}$. If  $x\in S^i_n:=\big\{y\in \frac{n^2+\mu}{n^2}B_n\;:\;f^*_{i,n}(y)\ge 1+\frac{\delta^2}{2n^2}\big\}$  then
$$||x-z_{i,n}||_n\le\frac{33\mu}{\delta^2}.$$
\end{lemma}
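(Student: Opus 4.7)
The plan is to exploit the fact that $z_{i,n}$ is the unique point of $\widetilde B_n$ at which $f_{i,n}^*$ attains its maximum, and to quantify the gap between this maximum and the value of $f_{i,n}^*$ elsewhere on $P_n \cup B_{\ell_2}$. First I would compute $M_1 := f_{i,n}^*(z_{i,n}) = 1 + \delta^2/(2n^2)$, $M_2 := \|f_{i,n}^*\|_2 = \sqrt{1 + \delta^2/(4n^2)}$, and $f_{i,n}^*(z_{j,n}) = 1 - \delta^2/(2n^2) < M_2$ for $j \ne i$. Thus every element of $(P_n \cup B_{\ell_2}) \setminus \{z_{i,n}\}$ has $f_{i,n}^*$-value at most $M_2$, and the elementary bound $\sqrt{1+u} \le 1 + u/2$ yields the crucial gap $M_1 - M_2 \ge 3\delta^2/(8n^2)$.

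Given $x \in S_n^i$, set $y := x/(1+\mu/n^2) \in B_n \subset \widetilde B_n$. Then $f_{i,n}^*(y) \ge M_1/(1+\mu/n^2) \ge M_1 - 2\mu/n^2$, using $M_1 \le 2$. For any $\ep > 0$ I would approximate $y$ in the $\ell_2$-norm, to within $\ep$, by a finite convex combination
$$y_\ep = c_1 z_{i,n} + c_2 z_{j,n} + c_3(-z_{i,n}) + c_4(-z_{j,n}) + c_0 v, \quad c_k \ge 0,\ \textstyle\sum_{k=0}^{4} c_k = 1,\ v \in B_{\ell_2},$$
which is available because $\widetilde B_n = \overline{\co}(P_n \cup B_{\ell_2})$. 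Evaluating $f_{i,n}^*$ and using $f_{i,n}^*(p) \le M_2$ on every summand other than $z_{i,n}$, one gets $f_{i,n}^*(y_\ep) \le M_1 - (1-c_1)(M_1 - M_2)$. Combined with $f_{i,n}^*(y_\ep) \ge f_{i,n}^*(y) - \|f_{i,n}^*\|_2\,\ep \ge M_1 - 2\mu/n^2 - 2\ep$, this forces $1 - c_1 \le \tfrac{8n^2}{3\delta^2}(2\mu/n^2 + 2\ep)$.

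To turn this mass estimate into a distance estimate, rewrite $y_\ep - z_{i,n} = c_2(z_{j,n} - z_{i,n}) - 2c_3 z_{i,n} - c_4(z_{j,n} + z_{i,n}) + c_0(v - z_{i,n})$ and bound each summand's $\ell_2$-norm by $3c_k$, using $\|z_{k,n}\|_2 \le \sqrt{2}$ (valid since $\delta \le 1/48$). Thus $\|y_\ep - z_{i,n}\|_2 \le 3(1-c_1)$, and letting $\ep \to 0$ yields $\|y - z_{i,n}\|_2 \le 16\mu/\delta^2$. Unfolding $x = (1+\mu/n^2)y$,
$$\|x - z_{i,n}\|_n \le \|x - z_{i,n}\|_2 \le (1+\mu/n^2)\,\|y - z_{i,n}\|_2 + (\mu/n^2)\|z_{i,n}\|_2 \le 33\mu/\delta^2,$$
absorbing the last term via $\mu/n^2 \le \mu/\delta^2$. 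The main obstacle is this convex-combination step: in $\ell_2$, $\co(P_n \cup B_{\ell_2})$ need not be closed, so one must run the cap estimate uniformly in the approximation parameter $\ep$ and pass to the limit.
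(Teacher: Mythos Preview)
Your proof is correct and follows essentially the same route as the paper's: isolate $z_{i,n}$ as the unique point of $P_n\cup B_{\ell_2}$ where $f^*_{i,n}$ is near its maximum, quantify the gap to the rest, rescale $x$ into $\widetilde B_n$, and use a convex-combination representation to bound the mass away from $z_{i,n}$ and hence the distance. The only cosmetic differences are that the paper writes the combination as $y=\lambda p+(1-\lambda)z_{i,n}$ with $p\in\co((P_n\setminus\{z_{i,n}\})\cup B_{\ell_2})$ and bounds $\|y-z_{i,n}\|_n\le 2\lambda$ directly, whereas you work in $\|\cdot\|_2$ and make the passage from $\co$ to $\cco$ explicit via the $\ep$-approximation.
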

Let us first state and prove the following Claim \ref{3ineq},
\begin{claim}\label{3ineq}
For every $n\in\N$ and $i=1,2$ the following inequality is satisfied,
\begin{equation}\label{ineq1}\sup_{x\in \co((P_n\setminus\{z_{i,n}\})\cup B_{\ell_2})} f^*_{i,n}(x)<1+\frac{\delta^2}{4n^2},\end{equation}
\begin{proof}[Proof of Claim \ref{3ineq}]
We compute $\sup f^*_{i,n}\big(B_{\ell_2}\big)$. If $x=(x_n)\in B_{\ell_2}$ then
$$f^*_{i,n}(x)=x_{2n}+(-1)^{i+1}\frac{\delta}{2n}x_{2n+1}\le \sqrt{1-x_{2n+1}^2}+\frac{\delta}{2n}|x_{2n+1}|.$$
Hence,
$$\begin{aligned}\sup f^*_{i,n}\big(B_{\ell_2}\big)\le&\max\limits_{t\in[0,1]}\sqrt{1-t^2}+\frac{\delta}{2n}t\\=&\sqrt{1-\frac{\Big(\frac{\delta}{2n}\Big)^2}{1+\Big(\frac{\delta}{2n}\Big)^2}}+\frac{\Big(\frac{\delta}{2n}\Big)^2}{\sqrt{1+\Big(\frac{\delta}{2n}\Big)^2}}<1+\Big(\frac{\delta}{2n}\Big)^2.\end{aligned}$$
Finally, if $j\in\{1,2\}\setminus \{i\}$ then
$$\begin{aligned}\sup_{x\in \co((P_n\setminus\{z_{i,n}\})\cup B_{\ell_2})} f^*_{i,n}(x)=&\sup_{x\in (P_n\setminus\{z_{i,n}\})\cup B_{\ell_2}} f^*_{i,n}(x)\\=&\max\{\sup f^*_{i,n}\big(P_n\setminus\{z_{i,n}\}\big),\sup f^*_{i,n}\big(B_{\ell_2}\big)\}\\=&\max\{f^*_{i,n}(z_{j,n}),\sup f^*_{i,n}\big(B_{\ell_2}\big)\}<1+\frac{\delta^2}{4n^2},\end{aligned}$$
as desired.
\end{proof}
\end{claim}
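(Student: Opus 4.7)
The plan is to exploit Claim \ref{3ineq} to show that, among points of $\widetilde B_n$, the functional $f^*_{i,n}$ is maximized exactly at $z_{i,n}$ with value $1+\delta^2/(2n^2)$ (one verifies by direct computation that $f^*_{i,n}(z_{i,n})=1+\delta^2/(2n^2)$ and that $f^*_{i,n}(z_{j,n})=1-\delta^2/(2n^2)$ for $j\neq i$), while its supremum over $\co((P_n\setminus\{z_{i,n}\})\cup B_{\ell_2})$ is strictly smaller with a gap of at least $\delta^2/(4n^2)$. Thus the threshold defining $S^i_n$ matches the maximum of $f^*_{i,n}$ on the unit ball $\widetilde B_n$, so any $x\in S^i_n$ must be close in direction to $z_{i,n}$ with respect to $\|\cdot\|_n$.

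Step 1 (sandwich the norm). Combining $\|\cdot\|_n\le|\cdot|_n\le\|\cdot\|_2$ from \eqref{equivnfine} with the identity $\|f^*_{i,n}\|^*_n=1+\delta^2/(2n^2)$ (which follows from Claim \ref{3ineq} and $f^*_{i,n}(z_{i,n})=1+\delta^2/(2n^2)$), the hypotheses $|x|_n\le 1+\mu/n^2$ and $f^*_{i,n}(x)\ge 1+\delta^2/(2n^2)$ force the sandwich $1\le\|x\|_n\le 1+\mu/n^2$. In particular $\|z_{i,n}\|_n=1$.

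Step 2 (decompose). Since $P_n$ is finite and $B_{\ell_2}$ is closed and convex, $\widetilde B_n=\co(P_n\cup B_{\ell_2})$, so $y:=x/\|x\|_n\in\widetilde B_n$ admits a representation $y=\lambda z_{i,n}+(1-\lambda)w$ with $\lambda\in[0,1]$ and $w\in\co((P_n\setminus\{z_{i,n}\})\cup B_{\ell_2})$ (the extremal case $\lambda=1$ is trivial and handled separately). Applying $f^*_{i,n}$ to this convex combination and invoking Claim \ref{3ineq} yields
$$f^*_{i,n}(y)\le 1+\tfrac{\delta^2}{2n^2}-(1-\lambda)\tfrac{\delta^2}{4n^2}.$$
On the other hand Step 1 gives $f^*_{i,n}(y)=f^*_{i,n}(x)/\|x\|_n\ge(1+\delta^2/(2n^2))/(1+\mu/n^2)$. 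Rearranging the two inequalities produces $1-\lambda\le \tfrac{4n^2}{\delta^2}(1+\delta^2/(2n^2))\tfrac{\mu}{n^2+\mu}$, which is $O(\mu/\delta^2)$ uniformly in $n\ge 1$.

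Step 3 (conclude). Using the triangle inequality
$$\|x-z_{i,n}\|_n\le\|x\|_n\|y-z_{i,n}\|_n+\big|\|x\|_n-1\big|\cdot\|z_{i,n}\|_n\le 2(1-\lambda)\|x\|_n+\mu/n^2,$$
and plugging in the bound from Step 2 together with $\|x\|_n\le 1+\mu/n^2$ gives $\|x-z_{i,n}\|_n\le 33\mu/\delta^2$ with margin to spare. The only real obstacle is the bookkeeping in Step 2, namely converting the ratio $(1+\delta^2/(2n^2))/(1+\mu/n^2)$ into a clean upper bound on $1-\lambda$ in terms of $\mu/\delta^2$, and then tracking the multiplicative factors $\|x\|_n$ and the additive remainder $\mu/n^2$ in Step 3 so that they fit inside the constant $33$; conceptually the argument is the standard one that a point attaining (up to $O(\mu)$) the maximum of a linear functional on a polytope-like convex body must lie near the unique maximizing vertex.
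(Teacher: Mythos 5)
Your proposal does not prove Claim~\ref{3ineq}; it proves (a variant of) Lemma~\ref{mainlemmaNPM}, the statement in whose proof Claim~\ref{3ineq} is embedded. In fact your write-up explicitly \emph{invokes} Claim~\ref{3ineq} twice --- once to get $\|f^*_{i,n}\|_n^*=1+\delta^2/(2n^2)$ in Step~1, and again to get $f^*_{i,n}(y)\le 1+\tfrac{\delta^2}{2n^2}-(1-\lambda)\tfrac{\delta^2}{4n^2}$ in Step~2 --- so as an argument for the inequality \eqref{ineq1} it is circular, and as an argument for anything else it is off-target.

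What Claim~\ref{3ineq} actually requires is an upper bound on $\sup f^*_{i,n}$ over $\co\big((P_n\setminus\{z_{i,n}\})\cup B_{\ell_2}\big)$. Since $f^*_{i,n}$ is linear, this supremum equals $\max\{f^*_{i,n}(p):p\in P_n\setminus\{z_{i,n}\}\}\vee\sup f^*_{i,n}(B_{\ell_2})$. The finite part is a direct computation: with $j\ne i$ one has $f^*_{i,n}(z_{j,n})=1+(-1)^{i+j}\delta^2/(2n^2)=1-\delta^2/(2n^2)$ and $f^*_{i,n}(-z_{i,n}),f^*_{i,n}(-z_{j,n})<0$, so this piece is $<1$. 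The ball part requires the elementary optimization $\sup f^*_{i,n}(B_{\ell_2})\le\max_{t\in[0,1]}\big(\sqrt{1-t^2}+\tfrac{\delta}{2n}t\big)=\sqrt{1+(\delta/2n)^2}<1+(\delta/2n)^2=1+\delta^2/(4n^2)$, using that $f^*_{i,n}$ depends only on the two coordinates $x_{2n},x_{2n+1}$. Both pieces sit strictly below $1+\delta^2/(4n^2)$, which is the content of \eqref{ineq1}. None of this appears in your proposal, so there is a genuine gap: the core computation you were asked to supply is missing, and what is present presupposes the result.
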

\begin{proof}[Proof of Lemma \ref{mainlemmaNPM}]
Let us take $i\in\{1,2\}$ and $n\in\N$ and define the closed slice
$$\widetilde S^i_n=\bigg\{y\in \widetilde B_n\;:\;f^*_{i,n}(y)\ge\frac{n^2}{n^2+\mu}\Big(1+\frac{\delta^2}{2n^2}\Big)\bigg\}.$$
If $y\in \co\big(P_n\cup B_{\ell_2}\big)$ then there is $\lambda\in [0,1]$ and $p\in \co\big((P_n\setminus\{z_{i,n}\})\cup B_{\ell_2}\big) $ such that
$$y=\lambda p+(1-\lambda)z_{i,n}.$$
If we assume that $y\in \widetilde S^i_n$ then by Claim \ref{3ineq},
$$\begin{aligned}\frac{n^2}{n^2+\mu}\Big( 1+\frac{\delta^2}{2n^2} \Big)\le f^*_{i,n}(y)=&\lambda f^*_{i,n}(p)+(1-\lambda)f^*_{i,n}(z_{i,n})\\\le& \lambda\Big(1+\frac{\delta^2}{4n^2}\Big)+(1-\lambda)\Big(1+\frac{\delta^2}{2n^2}\Big).\end{aligned}$$
Hence, $\lambda\le \frac{\mu 4n^2}{\delta^2(n^2+\mu)}\big(1+\frac{\delta^2}{2n^2}\big)<\frac{8\mu}{\delta^2}$ and so
$$||y-z_{i,n}||_n=||\lambda p-\lambda z_{i,n})||_n\le 2\lambda<\frac{16\mu}{\delta^2}.$$
Therefore, we have proven that
\begin{equation}\label{firstineq}
||y-z_{i,n}||_n\le\frac{16\mu}{\delta^2}\;\;\;\;\;\;\forall y\in \widetilde S^i_n.
\end{equation}
Finally, let us take $x\in S^i_n$. Then we get that $y:= \frac{n^2}{n^2+\mu}x\in \widetilde S^i_n$. Hence, by \eqref{firstineq} and the triangle inequality,
$$\begin{aligned}|| x-z_{i,n} ||_n\le& \Big|\Big| x-\frac{n^2+\mu}{n^2}z_{i,n} \Big|\Big|_n+\Big|\Big| \frac{n^2+\mu}{n^2}z_{i,n}-z_{i,n} \Big|\Big|_n\\\le&\frac{n^2+\mu}{n^2}||y-z_{i,n} ||_n+\frac{\mu}{n^2}<\frac{(n^2+\mu)16\mu}{n^2\delta^2}+\frac{\mu}{n^2}<\frac{33\mu}{\delta^2}.\end{aligned}$$
\end{proof}

Also, from \eqref{equivnfine} we know that $B_{\ell_2}\subset B_n\subset \widetilde B_n$. Hence, by Proposition \ref{indexB} we have that
$$B=\bigcup\limits_{n\in\N}B_n,$$
is closed, symmetric and strictly convex. Moreover, by \eqref{equivnfine}, we have that
$$B_{\ell_2}\subset B\subset(1+2\delta)B_{\ell_2}.$$
Hence, if $||\cdot||$ is the norm given by $B$, clearly $||\cdot||$ is a rotund norm in $\ell_2$ such that 
\begin{equation}\label{dBM}d_{BM}\big((\ell_2,||\cdot||),(\ell_2,||\cdot||_2)\big)\le\ln(1+2\delta)\le2\delta.\end{equation}

Given a set $K\subset \ell_2$ we will say that $K$ is finitely supported if there is $N\in\N$ such that $K\subset [e_i]_{i=1}^N$.

\begin{theorem}\label{NPMtheo}
Let $K\subset \ell_2$ be a nontrivial finitely supported compact convex set such that $\emph{diam}(K\cap[e_1])>0$. Then, there is $p\in K$ such that the nearest point map from $\ell_2$ onto $K$ given by the norm $||\cdot||$ is not uniformly continuous even when restricted to $p+\ep B_{\ell_2}$ for any $\ep>0$.
\begin{proof}
Given $n\in\N$ if we take $m\in\N$ with $n\neq m$ then $B_m\subset \widetilde B_m$ and $f_{i,n}^*\big(P_m\big)=\{0\}$ for $i=1,2$. Hence, by \eqref{ineq1},
$$\sup f^*_{i,n}\big( B_m\big)\le\sup f^*_{i,n}\big(\widetilde B_m\big)=\sup f^*_{i,n}\big(B_{\ell_2}\big)<1+\frac{\delta^2}{4n^2}.$$
Since $m\neq n$ was arbitrary and $B=\bigcup_nB_n$ we deduce that
\begin{equation}\label{mn}\sup f^*_{i,n}\big(B\setminus B_n\big)<1+\frac{\delta^2}{4n^2}.\end{equation}
Let us take $N\in\N$ such that $K\subset [e_i]_{i=1}^N$. We may assume that $\pm \ep\delta e_1\in K$ (translating $K$ if necessary and taking $\ep>0$ small enough). Let us denote $R:\ell_2\to K$ the nearest point map for the norm $||\cdot||$. We claim that
\begin{equation}\label{goalclaim}||R(\ep x_{i,n})+(-1)^i\ep\delta e_1||\le \ep\frac{66\mu}{\delta^2}\;\;\;\;\forall n>N\;,\;\;\forall i=1,2.\end{equation}
Indeed, by definition of $R$ we have that
$$\begin{aligned}||\ep x_{i,n}-R(\ep x_{i,n})||\le&||\ep x_{i,n}+(-1)^i\ep\delta e_1||=\ep||z_{i,n}||\le\ep|z_{i,n}|_n\\=&\ep\Big(1-\frac{\mu}{n^2}\Big)||z_{i,n}||_n+\ep\frac{\mu}{n^2}||z_{i,n}||_2\le\ep\frac{n^2+\mu}{n^2}.\end{aligned}$$
Equivalently, $\ep x_{i,n}-R(\ep x_{i,n})\in \ep\frac{n^2+\mu}{n^2}B$. Since $\mu<\frac{\delta^2}{5}$, using \eqref{mn} we get that for every $n>N$,
$$\begin{aligned}f^*_{i,n}\big(\ep x_{i,n}-R(\ep x_{i,n})\big)=&\ep f^*_{i,n}(x_{i,n})=\ep\Big( 1+\frac{\delta^2}{2n^2} \Big)\\>&\ep\frac{n^2+\mu}{n^2}\Big(1+\frac{\delta^2}{4n^2}\Big)\ge \sup f^*_{i,n}\Big(\ep\frac{n^2+\mu}{n^2}B\setminus\ep\frac{n^2+\mu}{n^2}B_n \Big).\end{aligned}$$
Hence, $\ep x_{i,n}-R(\ep x_{i,n})\in \ep \frac{n^2+\mu}{n^2}B_n$. Since we have shown that $f^*_{i,n}\big(\ep x_{i,n}-R(\ep x_{i,n})\big)=\ep\big( 1+\frac{\delta^2}{2n^2} \big)$ we conclude that $\ep x_{i,n}-R(\ep x_{i,n})\in \ep S^i_n$. By Lemma \ref{mainlemmaNPM} we have that $||\ep x_{i,n}-R(\ep x_{i,n})-\ep z_{i,n}||_n\le\ep\frac{33\mu}{\delta^2}$. Thus, we finally prove \eqref{goalclaim} since
$$\begin{aligned}||R(\ep x_{i,n})+(-1)^i\ep\delta e_1||\le&(1+2\delta)||R(\ep x_{i,n})+(-1)^i\ep\delta e_1||_n\\=&(1+2\delta)||\ep x_{i,n}-R(\ep x_{i,n})-\ep z_{i,n}||_n\le\ep(1+2\delta)\frac{33\mu}{\delta^2}.\end{aligned}$$
Now we are able to prove the statement of the theorem. In fact, since $\mu\le\frac{\delta^3}{132(1+2\delta)}$ then by \eqref{goalclaim} and the triangle inequality,
$$\begin{aligned}||R(\ep x_{1,n})-R(\ep x_{2,n})||\ge&||2\ep\delta e_1||-\sum\limits_{i=1}^2||R(\ep x_{i,n})+(-1)^i\ep\delta e_1||\\\ge&\frac{2\ep\delta}{1+2\delta}-\ep\frac{132\mu}{\delta^2}\ge\frac{\ep\delta}{1+\delta}>0,\;\;\forall n>N.\end{aligned}$$
We are now done since $||\ep x_{1,n}-\ep x_{2,n}||=\frac{2\ep\delta}{n}||e_{2n+1}||\xrightarrow{n\to\infty}0$.
\end{proof}
\end{theorem}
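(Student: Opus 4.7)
The plan is to show failure of uniform continuity by producing, for every $\ep>0$, pairs of points in $p+\ep B_{\ell_2}$ that are arbitrarily close but whose images under $R$ remain a fixed distance apart. The natural candidates are $\ep'x_{1,n}$ and $\ep'x_{2,n}$ for a small $\ep'$ depending on $\ep$: the difference $x_{1,n}-x_{2,n}=\frac{2\delta}{n}e_{2n+1}$ shrinks with $n$, while the vectors $z_{1,n}$ and $z_{2,n}$ remain separated by $2\delta$ in the $e_1$-direction.

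For the setup, I would first fix $N\in\N$ with $K\subset[e_j]_{j=1}^N$. Using $\diam(K\cap[e_1])>0$, choose $p\in K$ as the midpoint of a nontrivial segment of $K\cap[e_1]$ and translate so that $p=0$, obtaining $\pm\eta e_1\in K$ for some $\eta>0$. Given $\ep>0$, pick $\ep'\in(0,\ep/2]$ small enough that $\ep'\delta\le\eta$, so that $\pm\ep'\delta e_1\in K$ by convexity, and $\ep'x_{i,n}\in\ep B_{\ell_2}$ for every $n\in\N$ and $i=1,2$.

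The heart of the argument is to prove that $\|R(\ep'x_{i,n})+(-1)^i\ep'\delta e_1\|\le\ep'(1+2\delta)\tfrac{33\mu}{\delta^2}$ for all $n>N$. Using $(-1)^{i+1}\ep'\delta e_1\in K$ as a competitor, one bounds $\|\ep'x_{i,n}-R(\ep'x_{i,n})\|\le\ep'\|z_{i,n}\|\le\ep'\tfrac{n^2+\mu}{n^2}$, so the residual vector lies in $\ep'\tfrac{n^2+\mu}{n^2}B$. Because $K\subset[e_j]_{j=1}^N$ and $f_{i,n}^*$ depends only on $e_{2n}^*$ and $e_{2n+1}^*$ with $n>N$, the functional $f_{i,n}^*$ annihilates $K$; consequently $f_{i,n}^*(\ep'x_{i,n}-R(\ep'x_{i,n}))=\ep'(1+\delta^2/(2n^2))$. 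Inequality \eqref{mn} then rules out membership in $\ep'\tfrac{n^2+\mu}{n^2}(B\setminus B_n)$, forcing the residual into $\ep' S^i_n$, after which Lemma \ref{mainlemmaNPM} combined with the equivalences \eqref{baseequiv} and \eqref{equivnfine} between $\|\cdot\|_n$ and $\|\cdot\|$ yields the claim.

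To conclude, the smallness condition $\mu\le\delta^3/[132(1+2\delta)]$ imposed in the construction gives $\ep'(1+2\delta)\tfrac{66\mu}{\delta^2}\le\ep'\delta/2$, so the triangle inequality applied to the approximate identity $R(\ep'x_{1,n})-R(\ep'x_{2,n})\approx 2\ep'\delta e_1$ yields a uniform lower bound of order $\ep'\delta$ on $\|R(\ep'x_{1,n})-R(\ep'x_{2,n})\|$ for all $n>N$, whereas $\|\ep'(x_{1,n}-x_{2,n})\|=2\ep'\delta/n\to 0$. The main obstacle is the central estimate above: it requires the finite support of $K$ (to make $f_{i,n}^*$ vanish on $K$ for large $n$) to interact with the geometry by which $B$ exceeds $B_{\ell_2}$ only in the disjoint directions marked by the $z_{i,n}$'s (inequality \eqref{mn}), so as to confine the residual vector to the thin slice $\ep'S^i_n$ and thereby pin down $R(\ep'x_{i,n})$ near $(-1)^{i+1}\ep'\delta e_1$.
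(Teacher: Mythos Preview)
Your proposal is correct and follows essentially the same route as the paper's proof: bound the residual $\ep' x_{i,n}-R(\ep' x_{i,n})$ using the competitor $(-1)^{i+1}\ep'\delta e_1\in K$, use that $f^*_{i,n}$ vanishes on $K$ for $n>N$ together with the slice inequality \eqref{mn} to force the residual into $\ep' S^i_n$, and then apply Lemma~\ref{mainlemmaNPM} and the triangle inequality. The only (purely cosmetic) difference is that you keep the ball-radius $\ep$ and the scaling parameter $\ep'$ separate, whereas the paper identifies them after translating and shrinking.
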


Finally, the proof of Theorem \ref{mainNPM} follows now easily from Theorem \ref{NPMtheo}.

\begin{proof}[Proof of Theorem \ref{mainNPM}]
Let $K$ be a nontrivial finite dimensional compact convex subset of $\ell_2$. There is $N\in\N$ such that $\text{dim}([K])=N$ and there is $v\in [K]$ with $||v||_2=1$ such that $\text{diam}(K\cap[v])>0$. Then, due to the symmetries of $\ell_2$ and the orthogonal projection theorem, there is an onto isometry $T:\ell_2\to\ell_2$ such that $T([K])=[e_i]_{i=1}^N$ and $T(v)=e_1$. Hence, the nontrivial compact convex set $\widetilde K=T(K)$ is a finitely supported subset of $\ell_2$ such that $\text{diam}(\widetilde K\cap [e_1])>0$. Therefore, by Theorem \ref{NPMtheo} we know that the nearest point map from $\ep B_{\ell_2}$ to $\widetilde K$ given by the norm $||\cdot||$ is not uniformly continuous. We are done just taking the norm $|\cdot|$ given by
$$|x|=||T(x)||\;\;\;\;\forall x\in\ell_2.$$
\end{proof}

Theorem \ref{mainNPM} strongly contrasts with the fact that finite dimensional compact convex sets are clearly absolute Lipschitz retracts.

\bigskip

\textbf{Acknowledgements.} I would like to thank Petr Hájek for his valuable advice and support during the development of this note. I  am also grateful to Gilles Godefroy for encouraging me to carry out this project.

\bigskip

\printbibliography
\end{document}